\def\b{\beta}
\def\R{\mathbb R}
\def\dd{\Delta}
\def\E{{\mathbb E}}
\def\G{{\mathbb G}}
\def\H{{\mathbb H}}
\def\IK{I\!\!K}
\def\labda1{\lambda_1}
\def\labda2{\lambda_2}
\def\e{\varepsilon}
\def\t{\tau}
\def\s{\sigma}
\def\comment#1{\relax}
\def\=in{\mathop{\rm =}}
\numberwithin{equation}{section}
\theoremstyle{plain}
\def\G{{\mathbb G}}
\def\H{{\mathbb H}}
\def\G{{\mathbb G}}
\def\H{{\mathbb H}}
\def\dd{\Delta}
\newtheorem{theorem}{Theorem}
\newtheorem{lemma}{Lemma}
\newtheorem{remark}{Remark}
\newtheorem{example}{Example}
\begin{document}
\begin{frontmatter}
\title{Confidence intervals in monotone regression}
\runtitle{monotone regression}

\begin{aug}
\author{\fnms{Piet} \snm{Groeneboom}\corref{}\ead[label=e1]{P.Groeneboom@tudelft.nl}}
\and
\author{\fnms{Geurt} \snm{Jongbloed}\corref{}\ead[label=e2]{G.Jongbloed@tudelft.nl}}
\runauthor{Piet Groeneboom and Geurt Jongbloed}
\affiliation{Delft University of Technology}
\address{Delft Institute of Applied Mathematics, Mekelweg 4, 2628 CD Delft,
	The Netherlands.\\ 
	\printead{e1,e2}} 
\end{aug}

\begin{abstract}
We construct bootstrap confidence intervals for a monotone regression function. It has been shown that the ordinary nonparametric bootstrap, based on the nonparametric least squares estimator (LSE) $\hat f_n$ is inconsistent in this situation. We show, however, that a consistent bootstrap can be based on the smoothed $\hat f_n$, to be called the SLSE (Smoothed Least Squares Estimator).

The asymptotic pointwise distribution of the SLSE is derived. The confidence intervals, based on the smoothed bootstrap, are compared to intervals based on the (not necessarily monotone) Nadaraya Watson estimator and the effect of Studentization is investigated. We also give a method for automatic bandwidth choice, correcting work in \cite{SenXu2015}. The procedure is illustrated using a well known dataset related to climate change. 
\end{abstract} 

\begin{keyword}[class=AMS]
\kwd[Primary ]{62G05}
\kwd{62N01}
\kwd[; secondary ]{62-04}
\end{keyword}

\begin{keyword}
\kwd{confidence intervals}
\kwd{bandwidth choice}
\kwd{smoothed bootstrap}
\kwd{monotone regression}
\kwd{Nadaraya Watson}
\end{keyword}

\end{frontmatter}


\maketitle

\section{Introduction}
\label{sec:Intro}
We consider the monotone regression setting where we observe independent pairs $(X_i,Y_i)$ of random variables ($1\le i\le n$), where the $X_i$ are i.i.d.\ with non-vanishing density $g$ on $[0,1]$ and 
\begin{align}
\label{regression_model}
Y_i=f_0(X_i)+\e_i, \,\,\, 1\le i\le n.
\end{align}
The regression function $f_0:[0,1]\mapsto\R$ is nondecreasing or nonincreasing and the $\e_i$ are i.i.d.\ sub-Gaussian with expectation $0$ and variance $\s_0^2$, independent of the $X_i$'s. Our aim is to construct pointwise nonparametric confidence intervals for $f_0(t)$. In the discussion below we will only return to the nonincreasing functions in the  example, given in Section \ref{section:Mendota}, and for the rest stick to the nondecreasing functions.  The theory for these cases is completely similar.

The basic monotone least squared estimate (LSE) $\hat f_n$ of $f_0$ is the so-called isotonic regression of $(X_i,Y_i)$. This estimator
is defined as minimizer of
\begin{align*}
	\sum_{i=1}^n \bigl(Y_i-f(X_i)\bigr)^2
\end{align*}
over all nondecreasing functions $f$. This LSE can be computed via a straightforward method, using the so-called cumulative sum diagram (cusum diagram). From now on, we interpret $X_1,\ldots,X_n$ as {\it ordered} in the sense that $X_1< X_2\ldots< X_n$ and relabel the $Y_i$'s accordingly (as $Y_i$ related to the specific $X_i$).  The cusum diagram is then the set of points
\begin{align*}
(0,0),\qquad\Biggl(i,\sum_{j\le i}Y_j\Biggr)\qquad, i=1,\dots,n,
\end{align*}
and the monotone least squares estimator $\hat f_n(X_i)$ is given by the left-continuous slope of the greatest convex minorant of the cusum diagram evaluated at $i$ (see  Lemma 2.1 in \cite{piet_geurt:14}).

Constructing nonparametric pointwise bootstrap confidence intervals for $f_0(t)$  poses several difficulties. It has been proved by several authors that  the straightforward bootstrap, using resampling with replacement from the pairs $(X_i,Y_i)$ and computing the monotone least squares estimator $\hat f_n$ based the bootstrap samples, is inconsistent (see, e.g., \cite{kosorok:08}, \cite{SenXu2015} and \cite{sen_mouli_woodroofe:10} for results related to this phenomenon). It has been suggested in \cite{SenXu2015} in the context of interval censored data to generate  intervals by using a smoothed bootstrap. In the monotone regression setting, this means that one fixes the values of the $X_i$'s and generates bootstrap values of the $Y_i$ by resampling with replacement from the residuals of the $Y_i$ with respect to a smooth monotone estimate of $f_0$. 
This approach addresses the intrinsic cause of the inconsistency of the bootstrap method based on direct resampling of the pairs, namely the fact that the derivative $f_0'$ cannot be estimated directly by differentiating the (piecewise constant) monotonic least squares estimator $\hat f_n$.

Bootstrap confidence intervals based on the smoothed maximum likelihood estimator (SMLE) for the related current status model were proposed in \cite{kim_piet:17EJS} and \cite{kim_piet:18:SJS}. One of the main issues in these papers is the treatment of the bias, which will be treated differently in the present paper. The usual methods for dealing with the bias are undersmoothing and correction by estimating the bias. These methods are rather unsatisfactory for the present model. \cite{{marron:91}} suggests a third method, which we will systematically use in the sequel.

In section \ref{section:smoot_est}, we define a Smoothed Least Squares Estimator (SLSE) for $f_0$. We show that this estimator is asymptotically normally distributed with rate $n^{2/5}$ and derive its asymptotic bias and variance. Furthermore, we consider the smooth but not necessarily monotone Nadaraya Watson (NW) estimator of $f_0$. Based on the SLSE and the NW estimator, we propose bootstrap methods to construct confidence sets for $f_0(t)$ in section \ref{section:smoot_boot}. We also prove a theorem stating that the bootstrap method based on the SLSE asymptotically works, with specific choices for the various bandwidths involved. In particular, we show that the method for computing the optimal bandwidth in \cite{SenXu2015} will only work if   bandwidths of different order are chosen. Moreover, we empirically study the effect of Studentization on coverage probabilities. In section \ref{section:bandwidth_choice} we address the problem of bandwidth selection in practice. Also here, we propose a smoothed bootstrap approach. Section \ref{section:Mendota} illustrates the method using a well know dataset related to climate change. We note that the methods described in this paper can be applied using the open software \cite{piet_github:21}.

\section{Smooth (monotone) estimation of the regression function}
\label{section:smoot_est}
As immediately follows from its construction, the LSE $\hat f_n$ is a (non-smooth) step function. It can be used to define a smooth  estimate. We define a particular {\it Smoothed} LSE (SLSE), $\tilde f_{nh}$. For this, let $K$ be  a symmetric twice continuously differentiable nonnegative kernel with support $[-1,1]$ such that $\int K(u)\,du=1$. Let $h>0$ be a bandwidth and define the scaled kernel $K_h$  by
\begin{align}
	\label{def_K_h}
	K_h(u)=\frac1h K\left(\frac{u}{h}\right), \,\,\,u\in\R.
\end{align}
Then, for $t\in[h,1-h]$, the SLSE is defined by a two-sided local average of the LSE,
\begin{align}
\label{def_SLSE}
\tilde f_{nh}(t)=\int K_h(t-x)\,\hat f_n(x)\,dx.
\end{align}
Note that on  $[h,1-h]$ the SLSE inherits the monotonicity from $\hat f_n(t)$. For $t\notin[h,1-h]$ we use a boundary correction to be discussed later.
In this paper, we choose for $K$ the triweight kernel
\begin{align*}
	K(u)=\tfrac{35}{32}\left(1-u^2\right)^31_{[-1,1]}(u).
\end{align*}

Estimator (\ref{def_SLSE}) is rather different from the Nadaraya-Watson (NW) estimator, which is defined by
\begin{align}
\label{Nadaraya-Watson}
{\tilde f}_{nh}^{NW}(t)=\frac{\int y K_h(t-x)\,d\H_n(x,y)}{\int K_h(t-x)\,d\H_n(x,y)}=\frac{\sum_{i=1}^n Y_i K_h(t-X_i)}{\sum_{i=1}^n K_h(t-X_i)}\,,
\end{align}
for $t\in[h,1-h]$, where $\H_n$ is the empirical distribution function of the pairs $(X_i,Y_i)$, $i=1,\dots,n$. This estimator can also be differentiated, just as (\ref{def_SLSE}), but is not necessarily monotone.

The NW estimator (\ref{Nadaraya-Watson}) is seemingly simpler than SLSE (\ref{def_SLSE}), because it is expressed as sum over sample values, whereas (\ref{def_SLSE}) is an integral with respect to Lebesgue measure.
However, using integration by parts, (\ref{def_SLSE}) can also be rewritten as a simple sum. Indeed, for $t\in[h,1-h]$,
\begin{align}
\label{SLSE_as_sum}
\tilde f_{nh}(t)=\hat f_n(0)+\int_{x\in(0,t+h]}\IK_h(t-x)\,d\hat f_n(x)=\hat f_n(0)+\sum_{\t_i\in(0,t+h]}\IK_h(t-\tau_i) p_i,
\end{align}
where
\begin{align}
\label{def_IK}
\IK_h(y)=\int_{-\infty}^yK_h(u)\,du=\int_{-\infty}^{y/h} K(u)\,du.
\end{align}
Here the $\t_i$ are the locations of the jumps of the LSE $\hat f_n$ and the $p_i>0$ the sizes of the jumps.
This time the summation is over  points at stochastic locations $\t_i$ with stochastic masses $p_i$, characterizing the LSE. 

It is well known that defining $\tilde f_{nh}$ as in (\ref{def_SLSE})  outside the interval $[h,1-h]$, leads to  serious bias. We define $\tilde f_{nh}$ threfore slightly differently on the intervals near zero and one, using quadratic Taylor approximations at the points $h$ and $1-h$ respectively. 
More precisely, for  $t\in[0,h)$ we define
\begin{align}
	\label{SMLE2}
	&\widetilde f_{nh}(t)=\widetilde f_{nh}(h)+(t-h)\widetilde f_{nh}^{\prime}(h)+\tfrac12(t-h)^2\widetilde f_{n,h_0}^{\prime\prime}(h_0),
\end{align}
where
\begin{align*}
\widetilde f_{n,h_0}^{\prime\prime}(h_0)=\int K_{h_0}'(h_0-x)\,d\hat f_n(x)=\sum_{\t_i}K_{h_0}'(h_0-\t_i)p_i,
\end{align*}
where $h_0\asymp n^{-1/9}$ and the $\t_i$ are the points of jump of $\hat f_n$ with values $p_i$. Note that $\widetilde f_{n,h_0}^{\prime\prime}(h_0)$ converges in probability to to $f_0''(0+)$, as $n\to\infty$. We can replace $\widetilde f_{n,h_0}^{\prime\prime}(h_0)$ by any other consistent estimate of $f''(0+)$.

For $t\in[1-h,1]$ we define analogously
\begin{align}
	\label{SMLE3}
	&\widetilde f_{nh}(t)=\widetilde f_{nh}(h)+(t-(1-h))\widetilde f_{nh}^{\prime}(1-h)+\tfrac12(t-(1-h))^2\widetilde f_{n,h_0}^{\prime\prime}(1-h_0).
\end{align}
Note that we may lose the monotonicity in the boundary intervals $[0,h]$ and $[1-h,1]$  in this way, but that the probability of this phenomenon will tend to zero with increasing sample size.

For the NW estimator we use a different boundary correction, replacing the kernel $K$ by a linear combination of the kernels $K$ and $uK(u)$, because of its more complicated expression as a ratio. This type of boundary correction is for example described on p.\ 210 and 211 of \cite{piet_geurt:14}. 

We have the following asymptotic pointwise result for the SLSE. Note that by going from $\hat f_n(t)$ to (\ref{def_SLSE})
we improve the rate of convergence $n^{1/3}$ to $n^{2/5}$ and lose the ``non-standard asymptotics'' behavior of $\hat f_n$ (for its cube root $n$ convergence to Chernoff's distribution, see \cite{brunk:70}, Theorem 5.2, p. 190).

\begin{theorem}
\label{th:limit_SLSE}
	Let $f_0$ be a nondecreasing continuous function on $[0,1]$. Let $X_1,X_2,\ldots$ be i.i.d random variables with continuous density $g$, staying away from zero on $[0,1]$, and where the derivative $g'$ is continuous and bounded on $(0,1)$.
	Furthermore, let $\e_1,\e_2\ldots$ be i.i.d. random variables distributed according to a sub-Gaussian distribution with expectation zero and variance $0<\s_0^2<\infty$, independent of the $X_i$'s.	
	Then consider $Y_i$, defined by
	\begin{align*}
		Y_i=f_0(X_i)+\e_i, \,\,\,\, i=1,2,\ldots
	\end{align*}
	Suppose $t\in(0,1)$ such that $f_0$ has a strictly positive derivative and a continuous second derivative $f_0''(t)\ne0$ at $t$. Then, for the SLSE $\tilde f_{nh}$ defined by (\ref{def_SLSE}) based on the pairs $(X_1,Y_1),\ldots,(X_n,Y_n)$, and $h\sim cn^{-1/5}$ for $c>0$,
	\begin{align*}
		n^{2/5}\left\{\tilde f_{nh}(t)-f_0(t)\right\}\stackrel{{\cal D}}\longrightarrow N(\b,\s^2).
	\end{align*}
	Here
	\begin{align}
		\label{asymp_bias_var}
		\b=\tfrac12c^2 f_0''(t)\int u^2K(u)\,du\,\,\, \mbox{ and }\,\,\,\s^2=\frac{\s^2_{0}}{c g(t)}\int K(u)^2\,du.
	\end{align}
The asymptotically Mean Squared Error optimal constant $c$ is given by:
\begin{align*}
c=\left\{\frac{\s^2_{0}}{g(t)}\int K(u)^2\,du\Bigm/\left\{f_0''(t)\int u^2K(u)\,du\right\}^2\right\}^{1/5}.
\end{align*}
\end{theorem}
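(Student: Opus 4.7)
I would split the error into a deterministic smoothing bias and a stochastic fluctuation:
\begin{align*}
\tilde f_{nh}(t) - f_0(t) \;=\; \underbrace{\int K_h(t-x)\bigl[f_0(x)-f_0(t)\bigr]\,dx}_{B_n(t)} \;+\; \underbrace{\int K_h(t-x)\bigl[\hat f_n(x)-f_0(x)\bigr]\,dx}_{V_n(t)}.
\end{align*}
The bias $B_n(t)$ is handled by the substitution $u=(t-x)/h$ and a second-order Taylor expansion of $f_0$ around $t$; the linear term vanishes by symmetry of $K$, giving $B_n(t) = \tfrac12 h^2 f_0''(t)\int u^2 K(u)\,du + o(h^2)$. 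With $h\sim c n^{-1/5}$, multiplication by $n^{2/5}$ produces the asymptotic bias $\beta$.

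The core of the proof is to show that the stochastic part $V_n(t)$ is asymptotically equivalent to the toy linear smoother
\begin{align*}
T_n(t) \;:=\; \frac{1}{n\,g(t)}\sum_{i=1}^n K_h(t-X_i)\,\e_i,
\end{align*}
for which $n^{2/5}T_n(t)$ converges to $N(0,\sigma^2)$ by Lindeberg's CLT conditional on the $X_i$'s: the conditional variance is $\sigma_0^2(nhg(t))^{-1}\int K(u)^2\,du + o((nh)^{-1})$, and the Lindeberg condition is immediate from sub-Gaussianity of $\e_i$ together with $|K_h|=O(h^{-1})$. To establish $V_n(t) - T_n(t) = o_p(n^{-2/5})$, I would first switch from Lebesgue to the empirical measure $G_n$ of the $X_i$'s: since $K_h(t-\cdot)\hat f_n(\cdot)$ has total variation $O(h^{-1})$ and $\hat f_n$ is monotone and uniformly bounded in probability, a standard empirical-process bound with $\|G_n-G\|_\infty=O_p(n^{-1/2})$ makes the change of measure cost $o_p(n^{-2/5})$. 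The resulting sum is then analyzed through the cusum characterization of $\hat f_n$: at every touch point $\tau_k$ of the greatest convex minorant, $\sum_{X_i\le\tau_k}\hat f_n(X_i) = \sum_{X_i\le\tau_k}Y_i$, so summation by parts regrouping $K_h(t-X_i)$ over the constancy blocks of $\hat f_n$ converts the sum into the kernel-residual sum $T_n(t)$ plus a remainder governed by the oscillation of $K_h$ across one block.

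The principal obstacle is this last step. Although $\hat f_n$ converges to $f_0$ only at the cube-root rate $n^{-1/3}$ pointwise, kernel averaging must upgrade this to an $o_p(n^{-2/5})$ error; achieving this requires sharp control of the inter-touch-point distances of the GCM (of order $n^{-1/3}$), which is small relative to the bandwidth $h\asymp n^{-1/5}$. The relevant bounds come from the shape-constrained empirical-process machinery (entropy bounds for uniformly bounded monotone functions, the switch relation characterising the GCM, and local modulus estimates for the cusum process), and they feed into the summation-by-parts remainder to give the required $o_p(n^{-2/5})$ rate. Combining the three steps then yields the stated limit.
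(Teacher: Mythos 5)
Your overall architecture --- bias plus stochastic term, linearization to the kernel--weighted error sum $T_n(t)$, conditional Lindeberg CLT, and a remainder controlled through the block structure of the LSE via the cusum/normal equations --- is the same as the paper's, and your treatment of the bias term and of the CLT for $T_n(t)$ is correct. The gap lies in the two quantitative mechanisms you offer for the remainder; neither delivers $o_p(n^{-2/5})$ as stated. For the change of measure from Lebesgue to the empirical measure of the $X_i$, you bound the integrand $K_h(t-\cdot)\hat f_n(\cdot)$ by its total variation $O(h^{-1})$, and the resulting pathwise inequality gives only $O_p(h^{-1}n^{-1/2})=O_p(n^{-3/10})$, which is \emph{larger} than $n^{-2/5}$. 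To rescue this step you must work with $K_h(t-\cdot)\{\hat f_n(\cdot)-f_0(\cdot)\}/g(\cdot)$ and use that $\sup_{|x-t|\le h}|\hat f_n(x)-f_0(x)|=O_p(h)$, which brings the total variation down to $O_p(1)$; the paper instead applies an entropy maximal inequality for uniformly bounded monotone classes together with $\|(\hat f_n-f_0)1_{[a,b]}\|_2=O_p(n^{-1/3})$, obtaining $O_p(h^{-1}n^{-2/3})=O_p(n^{-7/15})$.

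The more serious problem is the summation-by-parts remainder, which you attribute to the comparison ``knot spacing $\delta\asymp n^{-1/3}\ll h$.'' That comparison alone is not the operative mechanism: bounding the oscillation of $K_h$ over a block by $h^{-2}\delta$ and taking absolute values of the residuals $Y_i-\hat f_n(X_i)$ (which are $O_p(1)$, not small) makes the $O(h/\delta)$ blocks in the kernel window contribute $O_p(h^{-1}\delta)=O_p(n^{-2/15})$ in total --- far too large. The missing idea, which is the crux of the paper's proof, is to choose the block-wise constant approximation $\bar\psi_{t,h}$ of $\psi_{t,h}=K_h(t-\cdot)/g$ at a crossing point of $f_0$ and $\hat f_n$ within each block (or at the appropriate block endpoint when there is no crossing), so that the hypothesis $f_0'(t)>0$ yields the pointwise domination $|\bar\psi_{t,h}(x)-\psi_{t,h}(x)|\lesssim h^{-2}|\hat f_n(x)-f_0(x)|$. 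Cauchy--Schwarz and the global $L_2$ rate $\|(\hat f_n-f_0)1_{[a,b]}\|_2=O_p(n^{-1/3})$ then give $O_p(h^{-1}n^{-2/3})=o_p(n^{-2/5})$ for the part with $y$ integrated out, while the companion term $\int\{\bar\psi_{t,h}(x)-\psi_{t,h}(x)\}\{y-\hat f_n(x)\}\,d(\H_n-H_0)(x,y)$ is handled by the same entropy bound. Note also that your constancy blocks are data-dependent, so you cannot simply treat $\sum_{i\in\text{block}}\epsilon_i$ as a centered sum of the right order; the paper avoids this by routing everything through $d(\H_n-H_0)$ and uniform entropy bounds rather than conditioning on the block partition. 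Without the $h^{-2}|\hat f_n-f_0|$ domination (or an equivalent device exploiting $f_0'>0$), the argument does not close.
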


\begin{remark}
{\rm
For Example 1, discussed below, the conditions of Theorem \ref{th:limit_SLSE} are satisfied, and the asymptotically optimal bandwidth is approximately $0.7n^{-1/5}$ (in particular not depending on $t$). It is seen from Theorem \ref{th:limit_SLSE} that the smoothed LSE has the same rate of convergence and also the same asymptotic variance as the NW estimator under the usual conditions (see (\ref{asymp_var_NW}) below).
}
\end{remark}

We now give a road map of the proof of Theorem \ref{th:limit_SLSE}. The proof itself is given in Section \ref{section:appendix}. 
Since the estimators are based on the non-linear LSE $\hat f_n$, the proof is totally different from the proofs for the NW estimator. We have to introduce methods similar to those used in \cite{piet_geurt_birgit:10} for local smooth functionals in the current status model. 

The first step is to write
\begin{align}
	\label{eq:biascent}
	\tilde f_{nh}(t)-f_0(t)=\int K_h(t-x)\{\hat f_n(x)-f_0(x)\}\,dx+\int K_h(t-x)f_0(x)\,dx-f_0(t)
\end{align}
and represent the first term at the right hand side as functional
\begin{align*}
\int \psi_{t,h}(x)\left\{\hat f_n(x)-y\right\}\,\,dH_0(x,y), \mbox{ where }\psi_{t,h}(x)=\frac{K_h(t-x)}{g(x)}
\end{align*}
and $H_0$ is the distribution function of the pairs $(X_i,Y_i)$.
Next,  a piecewise constant version $\bar\psi_{t,h}$ of $\psi_{t,h}$ is constructed to be able to use the characterization of $\hat f_n$ as a least squares estimator, enabling us to write
\begin{align*}
\int \bar\psi_{t,h}(x)\left\{\hat f_n(x)-y\right\}\,\,dH_0(x,y)=\int \bar\psi_{t,h}(x)\left\{\hat f_n(x)-y\right\}\,\,d\bigl(H_0-\H_n\bigr)(x,y).
\end{align*}
Here $\H_n$ is the empirical distribution function of the pairs $(X_i,Y_i)$. The latter expression turns out to behave as the empirical integral
\begin{align*}
\int \psi_{t,h}(x)\left\{f_0(x)-y\right\}\,\,d\bigl(H_0-\H_n\bigr)(x,y),
\end{align*}
for which we have a central limit theorem, after multiplying with $n^{2/5}$ and letting $h\sim cn^{-1/5}$.

The main effort goes into showing that the remainder terms are of lower order. For example, it needs to be shown that
\begin{align*}
\int \left\{\bar\psi_{t,h}(x)-\psi_{t,h}(x)\right\}\left\{\hat f_n(x)-f_0(x)\right\}g(x)\,\,dx=o_p\bigl(n^{-2/5}\bigr).
\end{align*}
To this end we use the Cauchy-Schwarz inequality and the inequality
\begin{align*}
\left|\bar\psi_{t,h}(x)-\psi_{t,h}(x)\right|\le Kh^{-2}\left|\hat f_n(x)-f_0(x)\right|,
\end{align*}
for a $K>0$, which follows from a judicious choice of $\bar\psi_{t,h}$. We also need $L_2$ bounds for $\hat f_n-f_0$, restricted to an interval $[a,b]\subset(0,1)$, These follow from the condition that the errors $\e_i$ are sub-Gaussian and $L_2$-bounds for functions of uniformly bounded variation in Chapter 9 of \cite{geer:00}. Alternatively, we could use the ``switch relation'', as used in \cite{kim_piet:18:SJS} and \cite{SenXu2015}. The proof is completed by incorporating the behavior of the bias term in (\ref{eq:biascent}),
\begin{align*}
\int K_h(t-x)f_0(x)\,dx-f_0(t)=\tfrac12h^2f_0''(t) +o\left(h^2\right).
\end{align*}

\section{Confidence intervals based on the smoothed bootstrap}
\label{section:smoot_boot}
In this section, we will create confidence intervals for $f_0(t)$ based on the SLSE. As basis for the intervals, we choose the quantity
\begin{align}
	\label{eq:diffbase}
\tilde f_{nh}(t)-f_0(t)
\end{align}
as studied asymptotically in Theorem \ref{th:limit_SLSE}. The distribution of this quantity is approximated by the distribution of a related quantity based on observatons $(X_1,Y_1^*),\ldots,(X_n,Y_n^*)$ generated by an estimated model, which makes it a bootstrap approach. As an approximate (estimated) model, we choose to generate the $Y_i^*$-values by taking an {\it oversmoothed} SLSE, and adding noise to that. More precisely, we take $h_0\asymp n^{-1/9}$ (we will come back to this choice in Section \ref{section:bandwidth_choice}), compute $\tilde f_{nh_0}$ and also compute the residuals of the $Y_i$ with respect to this estimate:
\begin{align*}
E_i=Y_i-\tilde f_{nh_0}(X_i),\qquad i=1,\dots,n.
\end{align*} 
Next, we center the $E_i$ by substracting their mean:
\begin{align*}
\tilde E_i=E_i-n^{-1}\sum_{j=1}^n E_j, \qquad i=1,\dots,n.
\end{align*}

Using the $\tilde E_i$, we generate bootstrap samples
\begin{align}
\label{bootstrap1}
(X_i,Y_i^*)=\left(X_i,\tilde f_{nh_0}(X_i)+E_i^*\right),\qquad i=1,\dots,n,
\end{align}
where the $E_i^*$ are (discretely) uniformly drawn with replacement from the $\tilde E_i$, and consider the differences
\begin{align}
\label{SLSE_intervals0}
\tilde f_{nh}^*(t)-\tilde f_{nh_0}(t).
\end{align}
Here $\tilde f_{nh}^*(t)$ is the estimate of $f_0$, based on a bootstrap sample, with bandwidth $h$ as in (\ref{eq:diffbase}).
Note that we keep the $X_i$ fixed in the bootstrap samples.

\begin{example}
\label{example1}
{\rm
Consider the setting used in \cite{moumita:21}, who study a Bayesian approach to constructing confidence sets for $f_0$.  Take $f_0(x)=x^2+x/5$, $g(t)=1_{[0,1]}(t)$ and independent normal errors $\e_i$ with expectation $0$ and variance $0.01$. The NW estimator and the SLSE are shown as blue  solid curves  in Figure \ref{figure:CI_first_intervals100}.
The confidence intervals, of which the construction will be explained below, are shown in Figure \ref{figure:CI_first_intervals100} at the points $t=0.01,0.02,\dots,0.99$. The coverage is shown in Figure \ref{fig:percentages_naive_resampling100_first_function}. In Figure \ref{fig:percentages_naive_resampling100_second_function} we also show the results for the rather different function  $f_0(x)=\exp\{4(x -1/2)\}/\{1+\exp(4(x - 1/2)\}$, for which the second derivative is not constant. The results for sample size $n=500$ are given in Figure \ref{figure:coverage100_SLSE_NW_bootstrap_2nd_function}. For generating the confidence intervals and coverage percentages, we use  the code in {\color{red}  \cite{piet_github:21}}.
} 
\end{example}

\begin{figure}[!ht]
\begin{subfigure}[b]{0.45\textwidth}
\includegraphics[width=\textwidth]{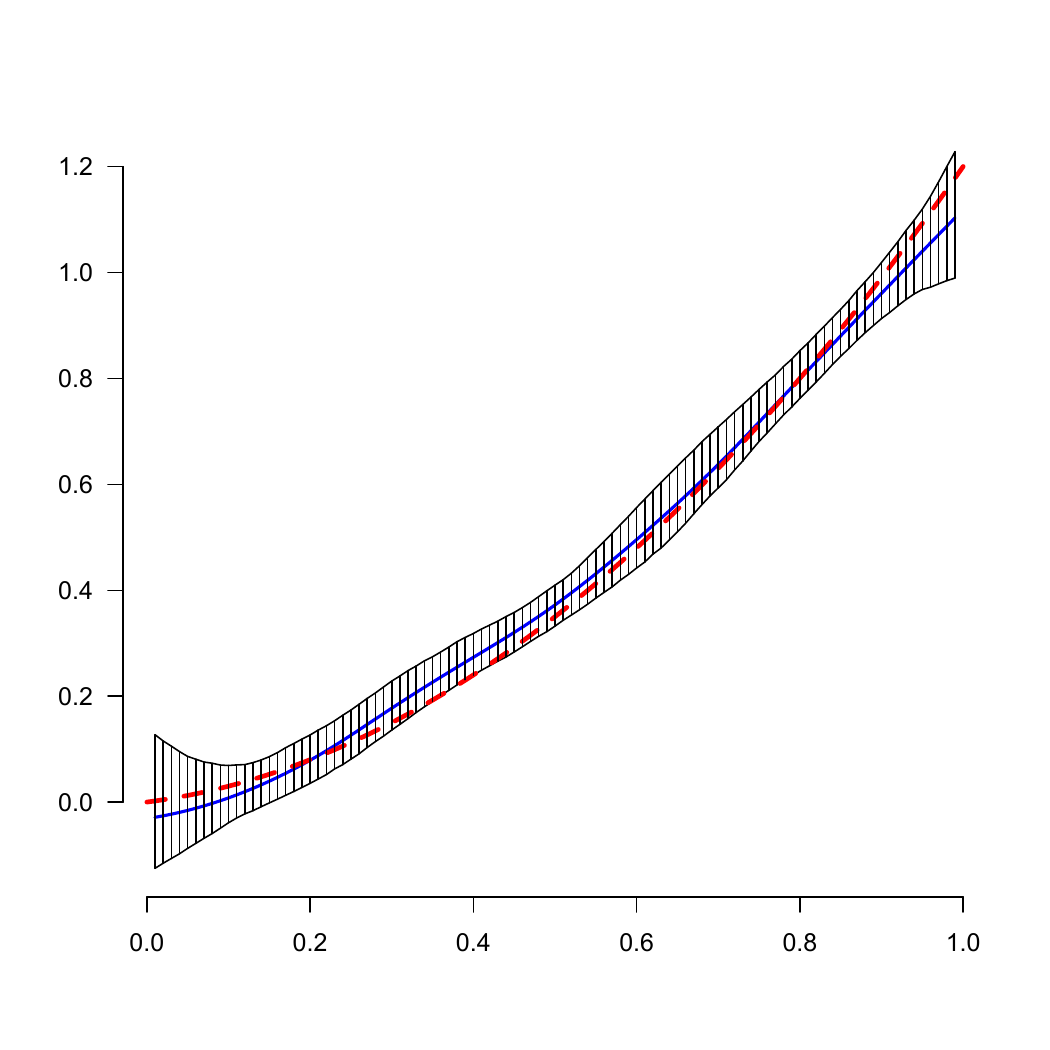}
\caption{}
\label{fig:CI_NW}
\end{subfigure}
\begin{subfigure}[b]{0.45\textwidth}
\includegraphics[width=\textwidth]{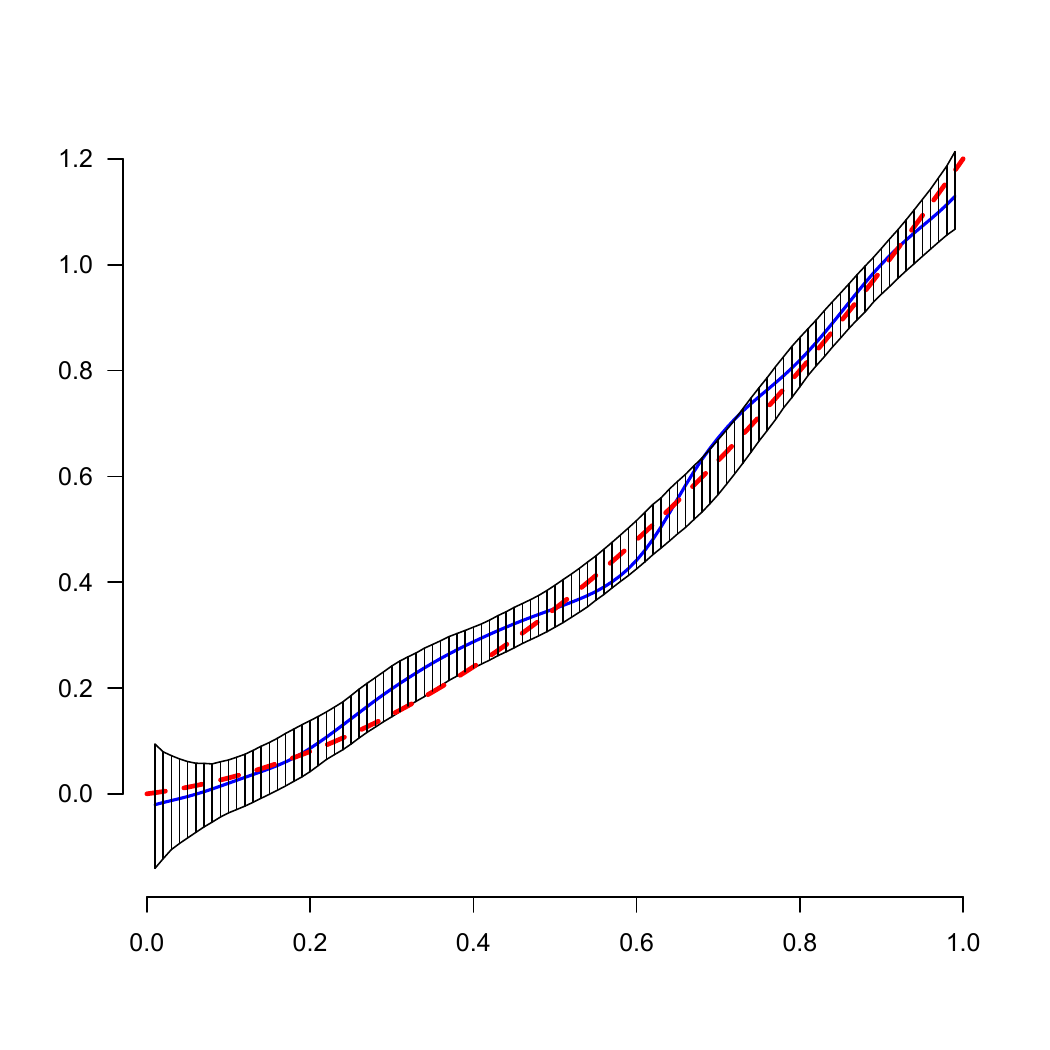}
\caption{}
\label{fig:CI_SLSE}
\end{subfigure}
\caption{(a) The SLSE (blue, solid) and $95\%$ confidence intervals for the same sample, using the confidence intervals (\ref{first_conf_intervals}). The  red dashed curve is $f_0$. (b) The NW estimator (blue, solid) and pointwise $95\%$ confidence intervals, for sample size $n=100$, the dashed red curve is the function $f_0$. In both cases the bandwidth $h=0.5n^{-1/5}$ and $h_0=0.7n^{-1/9}$.}
\label{figure:CI_first_intervals100}
\end{figure}

The $95\%$ bootstrap confidence intervals are given by
\begin{align}
	\label{first_conf_intervals}
	\left(\tilde f_{nh}(t)-Q_{0.975}^*,\tilde f_{nh}(t)-Q_{0.025}^*\right),
\end{align}
where $Q_{0.025}^*$ and $Q_{0.975}^*$ are the $2.5$th and $97.5$th percentiles of $1000$ (bootstrap) samples of (\ref{SLSE_intervals0}). 
Note that the percentiles $Q_{0.025}^*$ and $Q_{0.975}^*$ contain an estimate of the asymptotic bias
\begin{align*}
\tfrac12h^2\tilde f_{nh_0}''(t)\sim \tfrac12h^2f_0''(t),
\end{align*}
(see also Lemma \ref{lemma:bias_term} in Section \ref{section:bandwidth_choice}) and that therefore the bias of $\tilde f_{nh}(t)$ drops out in (\ref{first_conf_intervals}). So we do not need undersmoothing or explicit estimation of the bias in our procedure. The oversmoothing by taking $h_0\asymp n^{-1/9}$ (or at least a bandwidth tending to zero slower than $n^{-1/5}$) is essential here, though.

\begin{figure}[!ht]
\begin{subfigure}[b]{0.45\textwidth}
\includegraphics[width=\textwidth]{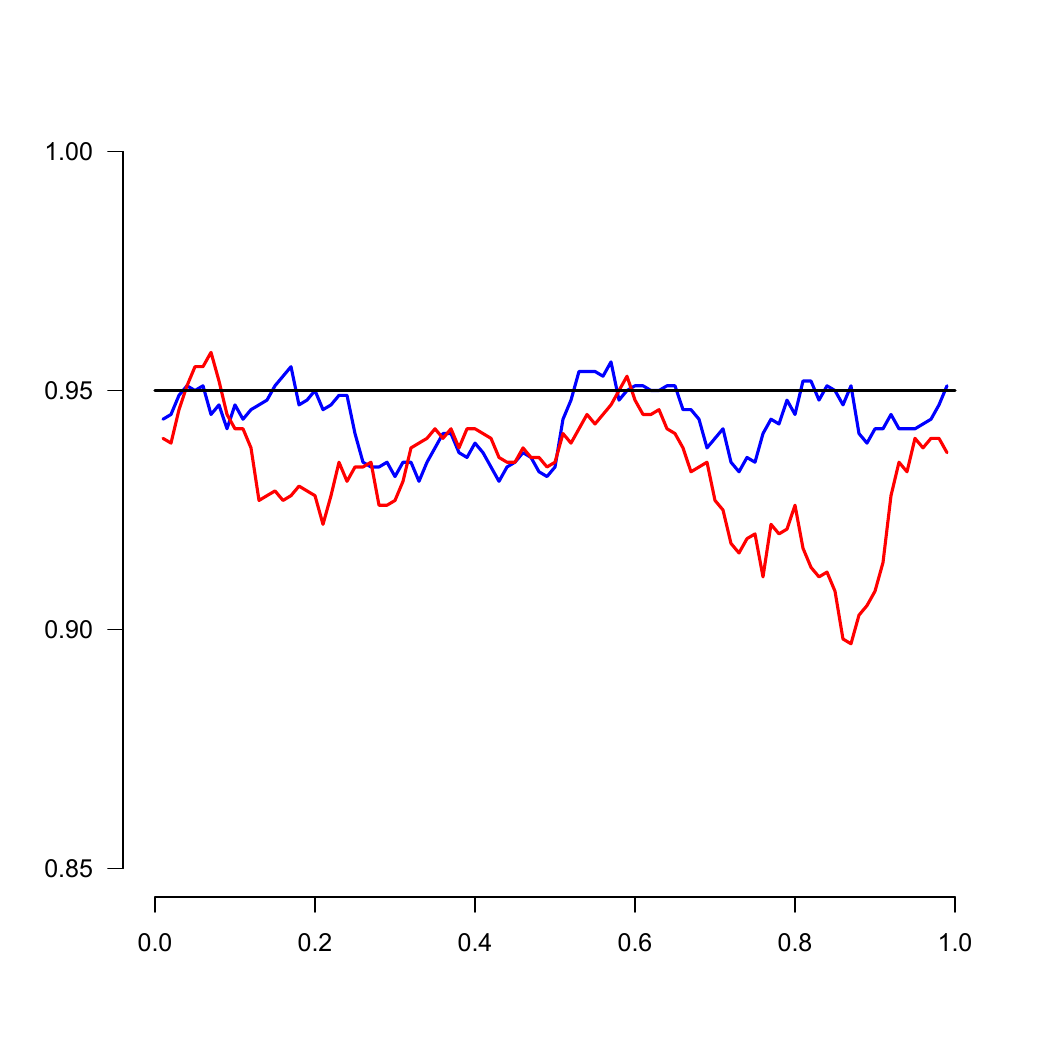}
\caption{}
\label{fig:percentages_naive_resampling100_first_function}
\end{subfigure}
\begin{subfigure}[b]{0.45\textwidth}
\includegraphics[width=\textwidth]{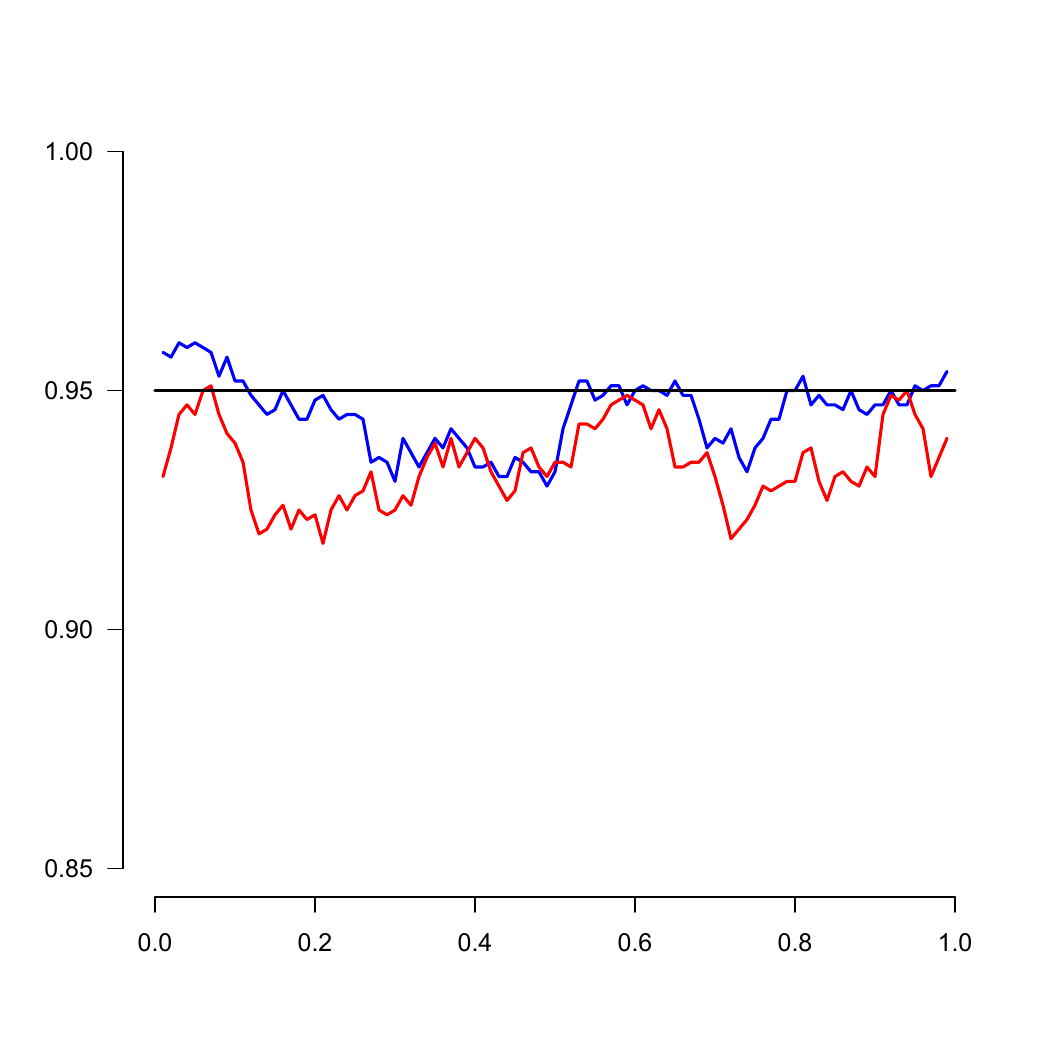}
\caption{}
\label{fig:percentages_naive_resampling100_second_function}
\end{subfigure}
\caption{Coverage of the confidence intervals, based on the bootstrap described. (a) Fraction of the $B$ experiments where $f_0(t)$ is in the interval (\ref{first_conf_intervals}) for  the function $f_0(x)=x^2+x/5$ and for the SLSE (blue) and  the NW estimator (red), and (b) fraction of the $B$ experiments where $f_0(t)$ is in the interval (\ref{first_conf_intervals}) for the function $f_0(x)=\exp\{4(x -1/2)\}/\{1+\exp(4(x - 1/2)\}$ and for the SLSE (blue) and the NW estimator (red), based on $B=1000$ samples of size $n=100$, and $t=0.01,\dots,0.99$. The chosen bandwidths $h$ and $h_0$ are $h=0.5n^{-1/5}$ and $h=0.7n^{-1/9}$.}
\label{figure:coverage100_SLSE_NW_bootstrap}
\end{figure}

Theorem \ref{th:bootstrap_SLSE} shows that asymptotically, the bootstrap method described will asymptotically give the right coverage, in the sense that after rescaling with $n^{2/5}$ the asymptotic distribution of (\ref{SLSE_intervals0}) under the estimated model (\ref{bootstrap1}) coincides with the asymptotic distribution of (\ref{eq:diffbase}) under model  (\ref{regression_model}). 

\begin{theorem}
\label{th:bootstrap_SLSE}
Let the conditions of Theorem \ref{th:limit_SLSE} be satisfied. Moreover, let $h\sim cn^{-1/5}$ and $h_0\sim c'n^{-1/9}$, for some positive constants $c$ and $c'$. Then, at $t\in(0,1)$,
\begin{align*}
n^{2/5}\left\{\tilde f_{nh}^*(t)-\tilde f_{nh_0}(t)\right\} \stackrel{\cal D}\longrightarrow N(\b,\s^2), 
\end{align*}
given $(X_1,Y_1),\dots,(X_n,Y_n)$, almost surely along sequences $(X_1,Y_1),(X_2,Y_2),\dots$, where $\b$ and $\s^2$ are defined in (\ref{asymp_bias_var}).
\end{theorem}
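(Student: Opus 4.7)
The strategy is to mirror the roadmap for Theorem~\ref{th:limit_SLSE} in the bootstrap world, conditionally on the original sample $(X_1,Y_1),\dots,(X_n,Y_n)$. Let $\H_n^*$ be the bootstrap empirical distribution of the pairs $(X_i,Y_i^*)$ and $\hat f_n^*$ the corresponding LSE, so $\tilde f_{nh}^*(t)=\int K_h(t-x)\hat f_n^*(x)\,dx$. In the bootstrap world, $\tilde f_{nh_0}$ plays the role of $f_0$, the centered residuals $\{\tilde E_1,\dots,\tilde E_n\}$ play the role of the error distribution, and the empirical distribution of $X_1,\dots,X_n$ plays the role of $G$. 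Parallel to~(\ref{eq:biascent}), decompose
\begin{align*}
\tilde f_{nh}^*(t)-\tilde f_{nh_0}(t) = \int K_h(t-x)\{\hat f_n^*(x)-\tilde f_{nh_0}(x)\}\,dx + \Bigl\{\int K_h(t-x)\tilde f_{nh_0}(x)\,dx-\tilde f_{nh_0}(t)\Bigr\}.
\end{align*}

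The bracketed bias term is a standard kernel-smoothing bias of the smooth function $\tilde f_{nh_0}$: a two-term Taylor expansion together with the symmetry of $K$ yields $\tfrac12 h^2\tilde f_{nh_0}''(t)\int u^2K(u)\,du+o_p(h^2)$. The oversmoothing choice $h_0\asymp n^{-1/9}$ is precisely the MSE-optimal bandwidth for kernel estimation of a second derivative with a standard order-$2$ kernel, so $\tilde f_{nh_0}''(t)$ converges to $f_0''(t)$ in probability, and therefore $n^{2/5}$ times this bias converges to $\beta$. For the stochastic part, introduce $\psi_{t,h}(x)=K_h(t-x)/g(x)$ and write (up to an asymptotically negligible replacement of $g$ by the empirical design density)
\begin{align*}
\int K_h(t-x)\{\hat f_n^*(x)-\tilde f_{nh_0}(x)\}\,dx = \int \psi_{t,h}(x)\{\hat f_n^*(x)-y\}\,dH_0^*(x,y),
\end{align*}
where $H_0^*$ is the bootstrap target distribution (first marginal with density $g$, conditional mean $\tilde f_{nh_0}$, error distribution the empirical law of $\{\tilde E_i\}$). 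Replace $\psi_{t,h}$ by a piecewise-constant surrogate $\bar\psi_{t,h}^*$ that is constant on the level sets of $\hat f_n^*$: the characterization of $\hat f_n^*$ as an LSE gives $\int\bar\psi_{t,h}^*(x)\{\hat f_n^*(x)-y\}\,d\H_n^*(x,y)=0$, and subtracting yields
\begin{align*}
\int \psi_{t,h}(x)\{\hat f_n^*(x)-y\}\,dH_0^*(x,y) = \int \psi_{t,h}(x)\{\tilde f_{nh_0}(x)-y\}\,d(H_0^*-\H_n^*)(x,y)+R_n^*,
\end{align*}
with $R_n^*$ collecting a $(\psi_{t,h}-\bar\psi_{t,h}^*)$-remainder and a replacement of $\hat f_n^*$ by $\tilde f_{nh_0}$ inside the integrand. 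The leading term on the right is, up to lower-order corrections, $n^{-1}\sum_{i=1}^n\psi_{t,h}(X_i)E_i^*$, a conditional sum of independent centered random variables; a Lindeberg CLT, together with the almost sure convergence of the empirical variance of $\{\tilde E_i\}$ to $\sigma_0^2$, delivers (after multiplication by $n^{2/5}$) a normal limit with the stated variance $\sigma^2$.

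The main obstacle is controlling $R_n^*$, in particular an estimate of the form
\begin{align*}
\int\bigl|\bar\psi_{t,h}^*(x)-\psi_{t,h}(x)\bigr|\cdot\bigl|\hat f_n^*(x)-\tilde f_{nh_0}(x)\bigr|\,g(x)\,dx = o_p\bigl(n^{-2/5}\bigr).
\end{align*}
Choosing $\bar\psi_{t,h}^*$ so that $|\bar\psi_{t,h}^*-\psi_{t,h}|\le Ch^{-2}|\hat f_n^*-\tilde f_{nh_0}|$---the bootstrap analogue of the inequality used in the proof of Theorem~\ref{th:limit_SLSE}---combined with Cauchy--Schwarz, reduces the bound to an $L^2$-estimate for $\hat f_n^*-\tilde f_{nh_0}$ on a neighbourhood of $t$. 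That $L^2$-bound is the bootstrap version of the estimate from Chapter~9 of \cite{geer:00}; it goes through because (i)~the bootstrap target $\tilde f_{nh_0}$ is monotone on $[h_0,1-h_0]$, (ii)~the bootstrap errors $E_i^*$ are almost surely uniformly sub-Gaussian (a property inherited from the $\epsilon_i$ via the $L^2(g)$-consistency of $\tilde f_{nh_0}$), and (iii)~the bootstrap design is almost surely bounded away from $0$ and $\infty$ on $[0,1]$. The same machinery handles the replacement of $\hat f_n^*$ by $\tilde f_{nh_0}$ inside the stochastic integral. Because all these bounds hold almost surely along the sequence $(X_1,Y_1),(X_2,Y_2),\dots$, the conditional convergence claimed in the theorem follows.
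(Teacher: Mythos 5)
Your proposal is correct and follows essentially the same route as the paper's proof: the same bias/stochastic decomposition, the piecewise-constant surrogate $\bar\psi_{t,h}^*$ combined with the least-squares characterization of $\hat f_n^*$, the Lindeberg--Feller argument for the leading term with the almost sure convergence of the empirical residual variance to $\s_0^2$, the $h^{-2}$-Lipschitz bound plus Cauchy--Schwarz and the $n^{-1/3}$ $L_2$-rate for the remainders, and the identification of the bias constant via the convergence of $\tilde f_{nh_0}''(t)$ to $f_0''(t)$ (Lemma \ref{lemma:bias_term}). The only cosmetic difference is that you take the bootstrap target distribution to have $X$-marginal with density $g$ and absorb the passage to the empirical design into a remainder, whereas the paper works directly with $\tilde H_{n,h_0}$ (whose $X$-marginal is $\G_n$) and isolates the term $\int\{\tilde f_{nh_0}(x)-\hat f_n^*(x)\}\psi_{t,h}(x)\,d(\G_n-G)(x)$; both handle it by the same entropy bound.
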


The proof is given in Section \ref{section:appendix}. It goes through similar steps as the proof of Theorem \ref{th:limit_SLSE} but is more complicated  because we are here in the ``bootstrap world'' and for example have to use the Lindeberg-Feller version of the central limit theorem to deal (conditionally) with the dependence on the changing regression function $\tilde f_{nh_0}$ instead of $f_0$.

We compare the confidence intervals based on the SLSE with confidence intervals based on the NW estimator. To construct the latter, we define the (empirical) residuals by
\begin{align*}
E_i=Y_i-\tilde f_{nh_0}^{NW}(X_i),\qquad i=1,\dots,n,
\end{align*}
where $\tilde f_{nh_0}^{NW}$ is the NW estimator with bandwidth $h_0$ (again of order $n^{-1/9}$), leading to the bootstrap quantity
\begin{align}
\label{NW_intervals0}
\left(\tilde f_{nh}^{NW}\right)^*(t)-\tilde f_{nh_0}^{NW}(t).
\end{align}
Here $\left(\tilde f_{nh}^{NW}\right)^*$ is the NW estimator based on (\ref{bootstrap1}) with $\tilde f_{nh_0}(X_i)$ replaced by  $\tilde f_{nh_0}^{NW}(X_i)$ and $E_i^*$ sampled with replacement from the residuals $\tilde E_i^{NW}$, $i=1,\dots,n$,
\begin{align*}
\tilde E_i^{NW}=E_i^{NW}-\bar E^{NW},\qquad E_i^{NW}=Y_i-\tilde f_{nh_0}^{NW}(X_i),\qquad \bar E^{NW}=n^{-1}\sum_{i=1}^n E_i^{NW}.
\end{align*} 

\begin{figure}[!ht]
\begin{subfigure}[b]{0.45\textwidth}
\includegraphics[width=\textwidth]{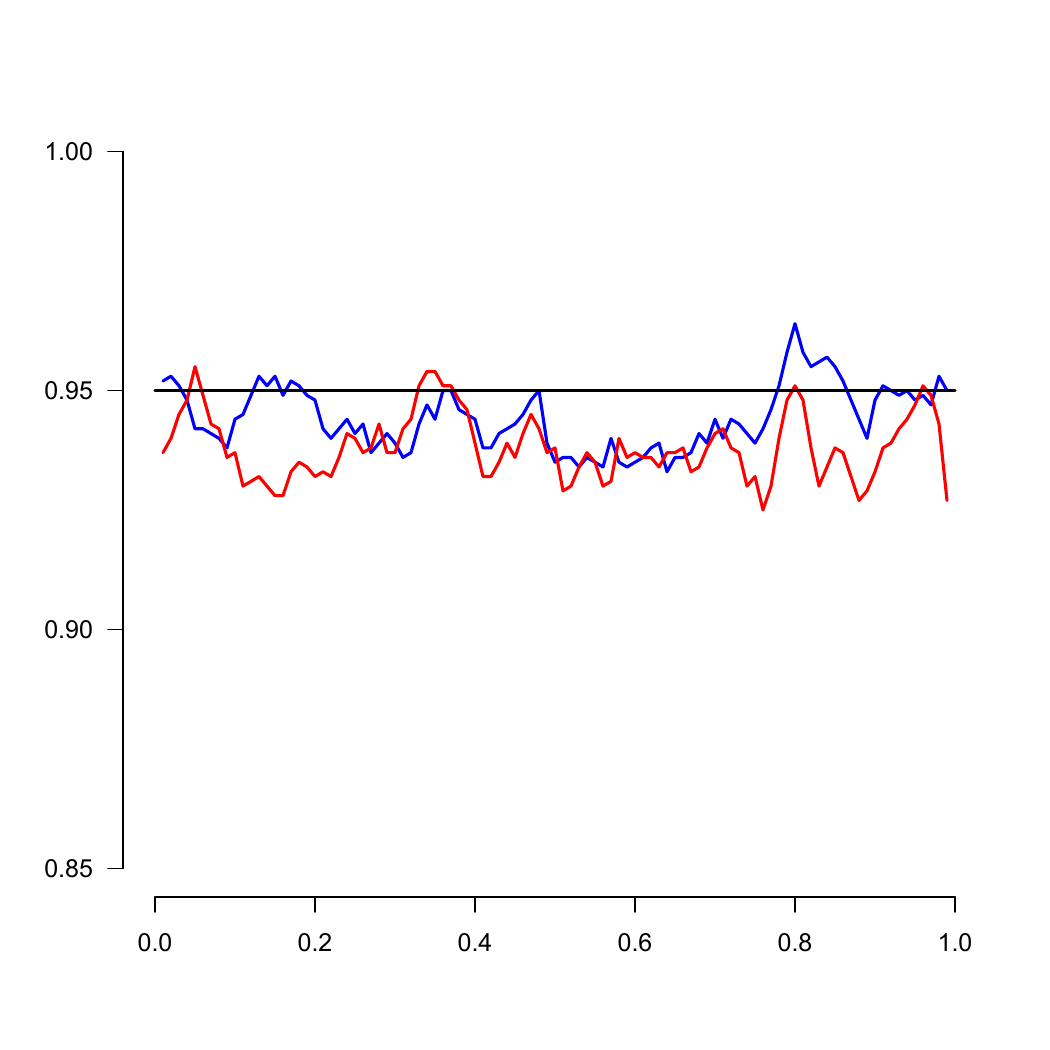}
\caption{}
\label{fig:percentage500_first_function}
\end{subfigure}
\begin{subfigure}[b]{0.45\textwidth}
\includegraphics[width=\textwidth]{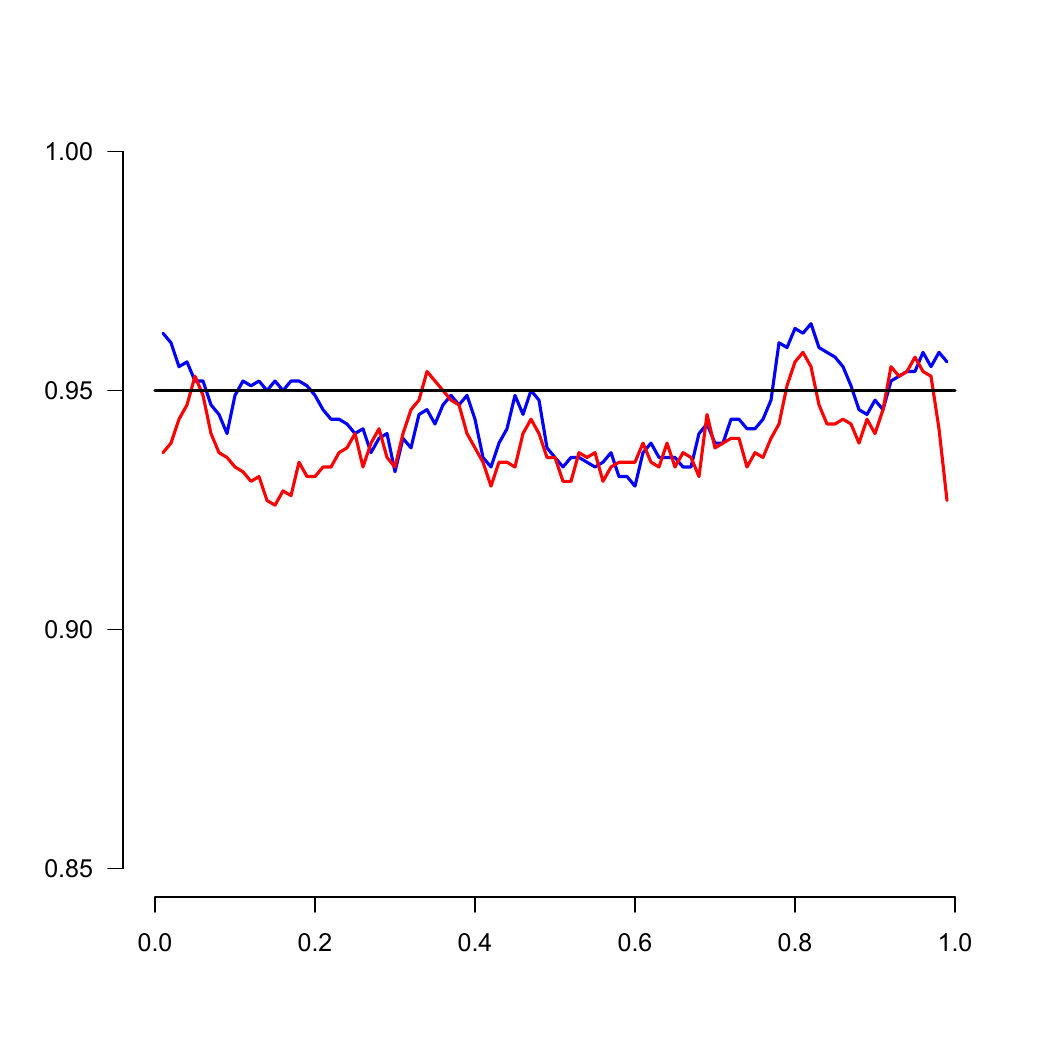}
\caption{}
\label{fig:percentages_naive_resampling500_second_function}
\end{subfigure}
\caption{Coverage of the confidence intervals, based on the bootstrap described. (a) Fraction of the $B$ experiments where $f_0(t)$ is in the intervals (\ref{first_conf_intervals}) for  the function $f_0(x)=x^2+x/5$ and for the SLSE (blue) and  the NW estimator (red), and (b) fraction of the $B$ experiments where $f_0(t)$ is in the intervals (\ref{first_conf_intervals}) for the function $f_0(x)=\exp\{4(x -1/2)\}/\{1+\exp(4(x - 1/2)\}$ and for the SLSE (blue) and the NW estimator (red), based on $B=1000$ samples of size $n=500$, and $t=0.01,\dots,0.99$. The chosen bandwidths $h$ and $h_0$ are $h=0.5n^{-1/5}$ and $h=0.7n^{-1/9}$.}
\label{figure:coverage100_SLSE_NW_bootstrap_2nd_function}
\end{figure}

In \cite{hall:92book} the variance of the NW estimator, conditionally on $X_1,\dots,X_n$, in the model (\ref{regression_model})  at $t$ is shown to be equal to
\begin{align}
\label{def_var_NW_Hall}
\t_t^2=\s_0^2\b_t^2,
\end{align}
where $\s_0^2=\text{var}(\e_i)$ and
\begin{align}
\label{beta_t_Hall}
\b_t^2=\frac{\sum_{i=1}^n K_h\left(t-X_i\right)^2}{\left\{\sum_{i=1}^n K_h(t-X_i)\right\}^2}\,.
\end{align}
In our set-up the parameter $\b_t$ is the same in the original sample and in the bootstrap samples, so to estimate $\tau_t^2$ we only need an estimate of $\s_0^2$.

Denoting the empirical distribution function of the $X_i$ by $\G_n$, note that
\begin{align*}
\frac1n\sum_{i=1}^n K_h\left(t-X_i\right)= \int K_h(t-x)\,d\G_n(x)\rightarrow^P g(t),
\end{align*}
whenever $g$ is continuous at $t$ and $h=h_n$ tends to zero such that $nh\rightarrow\infty$. Under the same conditions,
\begin{align*}
\frac{h}{n}\sum_{i=1}^n K_h\left(t-X_i\right)^2= h\int K_h(t-x)^2\,d\G_n(x)\rightarrow_P g(t)\int K(u)^2\,du.
\end{align*}
Therefore, as $n\rightarrow\infty$,
the variance of $\tilde f_{nh}^{NW}(t)$ behaves like 
\begin{align}
\label{asymp_var_NW}
\s_0^2\b_t^2=\sigma_0^2\frac{\frac1n\sum_{i=1}^n K_h\left(t-X_i\right)^2}{n\left\{\frac1n\sum_{i=1}^n K_h(t-X_i)\right\}^2}\sim\frac{\s_0^2}{nhg(t)}\int K(u)^2\,du=\frac{\s_0^2}{cn^{4/5}g(t)}\int K(u)^2\,du,
\end{align}
where we use $h=cn^{-1/5}$ in the final step. In view of  Theorem \ref{th:limit_SLSE}, it follows that the SLSE and the NW estimator (both rescaled and centered) have the same asymptotic variance.

A well known approach to improve the coverage of bootstrap confidence sets is Studentization. For the SLSE in the setting of this paper, this would mean that instead of using  difference (\ref{eq:diffbase}) as basis for the bootstrap, one would use a rescaled difference such that asymptotically the variance does not depend on unknown quantities anymore. In view of Theorem \ref{th:limit_SLSE}, this means
\begin{align}
	\label{eq:studentquantSLSE}
	\left\{\tilde f_{nh}(t)-f_0(t)\right\}/\hat\s_{n,0}
\end{align}
where the estimate  the variance $\s_0^2$, $\hat\s_{n,0}^2$,  is given by
\begin{align}
\label{var_estimate}
\hat\s_{n,0}^2=n^{-1}\sum_{i=1}^n\tilde E_i^2.
\end{align}
The distribution of (\ref{eq:studentquantSLSE}) under the true model is then approximated by the distribution of
\begin{align}
	\label{intervals_SLSE_Studentized}
	\left\{\tilde f_{nh}^*(t)-\tilde f_{nh_0}(t)\right\}/\hat\s_{n,0}^*
\end{align}
where
\begin{align*}
\left(\hat\s^*_{n,0}\right)^2=n^{-1}\sum_{i=1}^n\left(\tilde E_i^*-\bar E^*\right)^2,\qquad \bar E^*=n^{-1}\sum_{i=1}^n \tilde E_i^*,
\end{align*}
is the variance estimate based on a bootstrap sample, and where again $h_0\asymp n^{-1/9}$.  

A $95\%$ confidence interval for $f_0(t)$ can then be based on the $2.5$th and $97.5$th percentiles $Q_{0.025}^*$ and $Q_{0.975}^*$ of $1000$ bootstrap draws of (\ref{intervals_SLSE_Studentized}). It is then given by
\begin{align}
\label{second_conf_intervals_SLSE_Studentized}
\left(\tilde f_{nh}(t)-Q_{0.975}^*\hat\s_{n,0},\hat f_{nh}(t)-Q_{0.025}^*\hat\s_{n,0}\right),
\end{align}
where $\hat\s_{n,0}^2$ is defined by (\ref{var_estimate}). The comparison with the bootstrap intervals based on the SLSE without Studentization is shown in Figure \ref{fig:isotone+Studentized}.

For the NW estimator, an estimate of $\s_0^2$ is given on p. 226 of \cite{hall:92book} (but note the typo w.r.t.\ the index $j$ in \cite{hall:92book}). We take the definition from \cite{Hall_Kay:90} and define
\begin{align}
\label{sigma_NW}
\left(\hat\s_n^{NW}\right)^2=(n-m)^{-1}\sum_{i=1}^{n-2}\left(\sum_{j=0}^2 d_j Y_{i+j}\right)^2,
\end{align}
for the variance of the $\e_i$, where $m=2$ and
\begin{align*}
\left(d_0,d_1,d_2\right)=\left(\tfrac14(\sqrt{5}+1),-\tfrac12,-\tfrac14(\sqrt{5}-1)\right),
\end{align*}
(as recommended in \cite{Hall_Kay:90}).

\begin{figure}[!ht]
\begin{subfigure}[b]{0.45\textwidth}
\includegraphics[width=\textwidth]{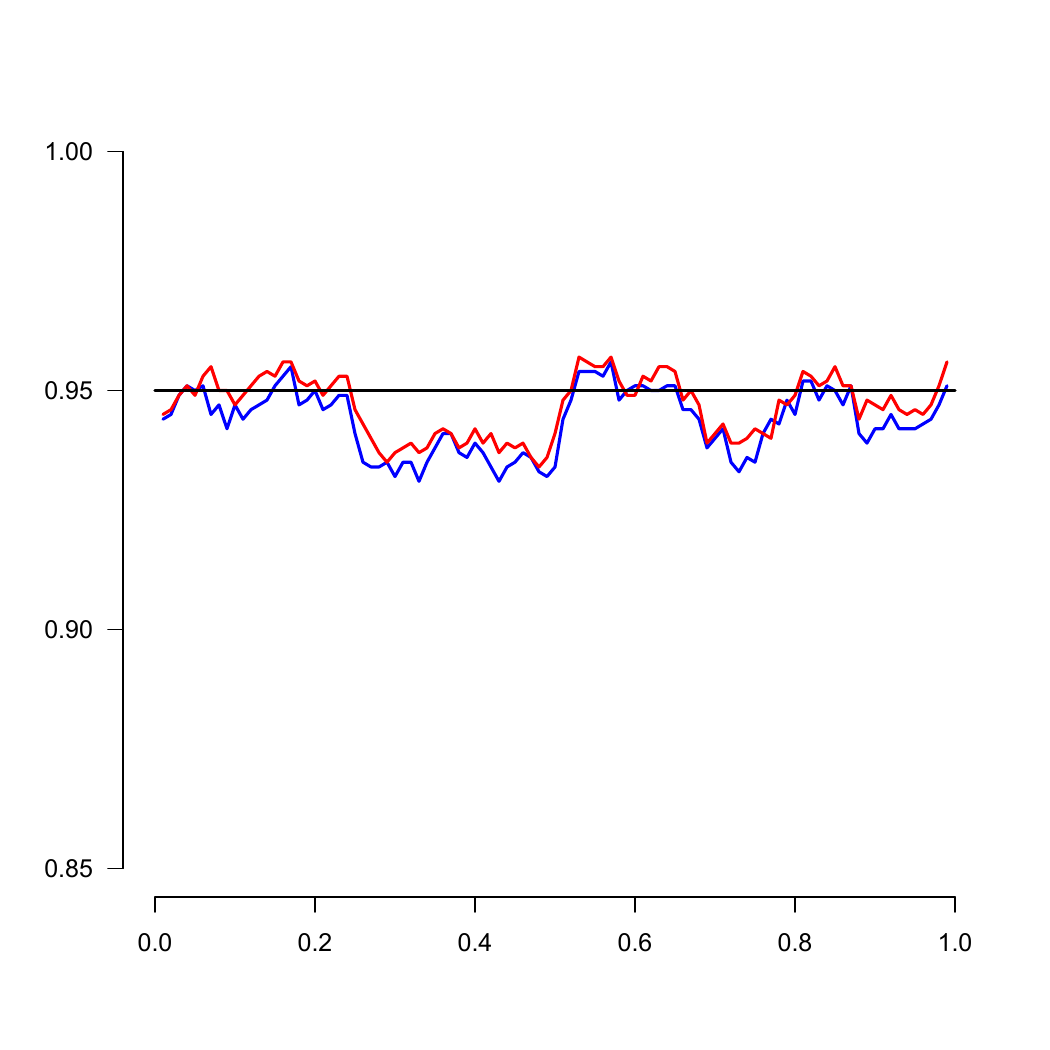}
\caption{}
\label{fig:isotone+Studentized}
\end{subfigure}
\begin{subfigure}[b]{0.45\textwidth}
\includegraphics[width=\textwidth]{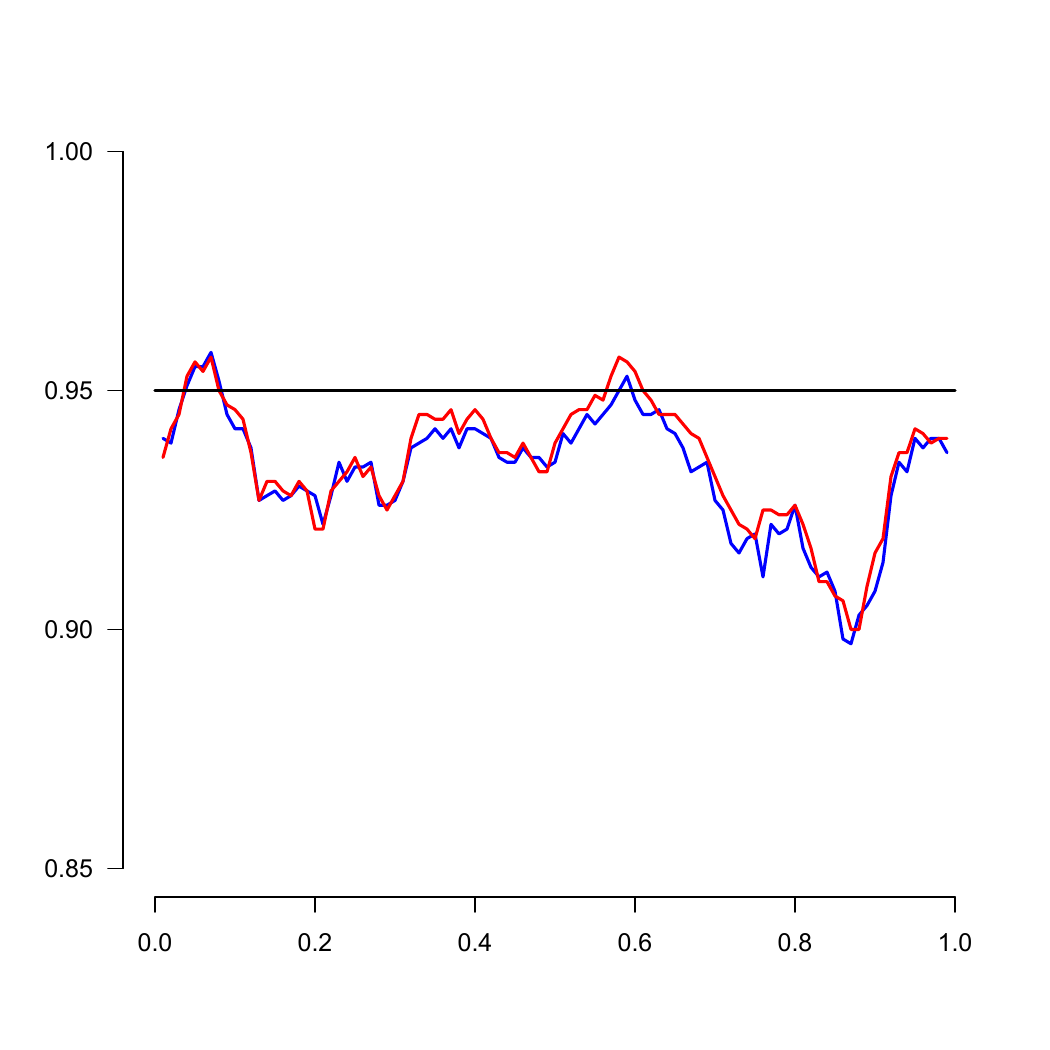}
\caption{}
\label{fig:NW+Studentized}
\end{subfigure}
\caption{Coverage of the confidence intervals, Studentized and non-Studentized, based on the bootstrap described. Fraction of the $B$ experiments where $f_0(t)$ is in the intervals (\ref{first_conf_intervals}) for  the function $f_0(x)=x^2+x/5$ for (a) the Studentized SLSE (red) and  the non-Studentized SLSE (blue), and (b) the Studentized NW estimator (red) and the non-Studentized NW estimator (blue). The figures are based on $B=1000$ samples of size $n=100$, and $t=0.01,\dots,0.99$. The chosen bandwidths $h$ and $h_0$ are $h=0.5n^{-1/5}$ and $h_0=0.7n^{-1/9}$.}
\label{figure:coverage100_Studentized_bootstrap}
\end{figure} 

If we now compare the non-Studentized and Studentized confidence intervals based on the NW estimator, constructed in the same way as in the case of the SLSE, we get Figure \ref{fig:NW+Studentized}
Here the non-Studentized are based on the differences (\ref{NW_intervals0})
and the Studentized intervals on the differences
\begin{align}
\label{intervals_NW2}
\left\{\left(\tilde f_{nh}^{NW}\right)^*(t)-\tilde f_{nh_0}^{NW}(t)\right\}/\left(\hat\s_n^{NW}\right)^*
\end{align}
where $\left(\hat\s_n^{NW}\right)^*$ is the estimate (\ref{sigma_NW}) for the bootstrap samples.
It is seen that in both cases there is not a great improvement.

For the Nadaraya-Watson estimator, one can also use the estimate of $\s_0$, based on the residuals, the type of estimate of $\s_0$ we used for the SLSE. In this case we get a bit more improvement for the Nadaraya-Watson estimator, see Figure \ref{figure:NW_Studentized2}. We still do not understand this phenomenon, based on the different ways of estimating $\s_0$ for the Nadaraya-Watson estimator, however.

\begin{figure}[!ht]
\centering
\includegraphics[width=0.45\textwidth]{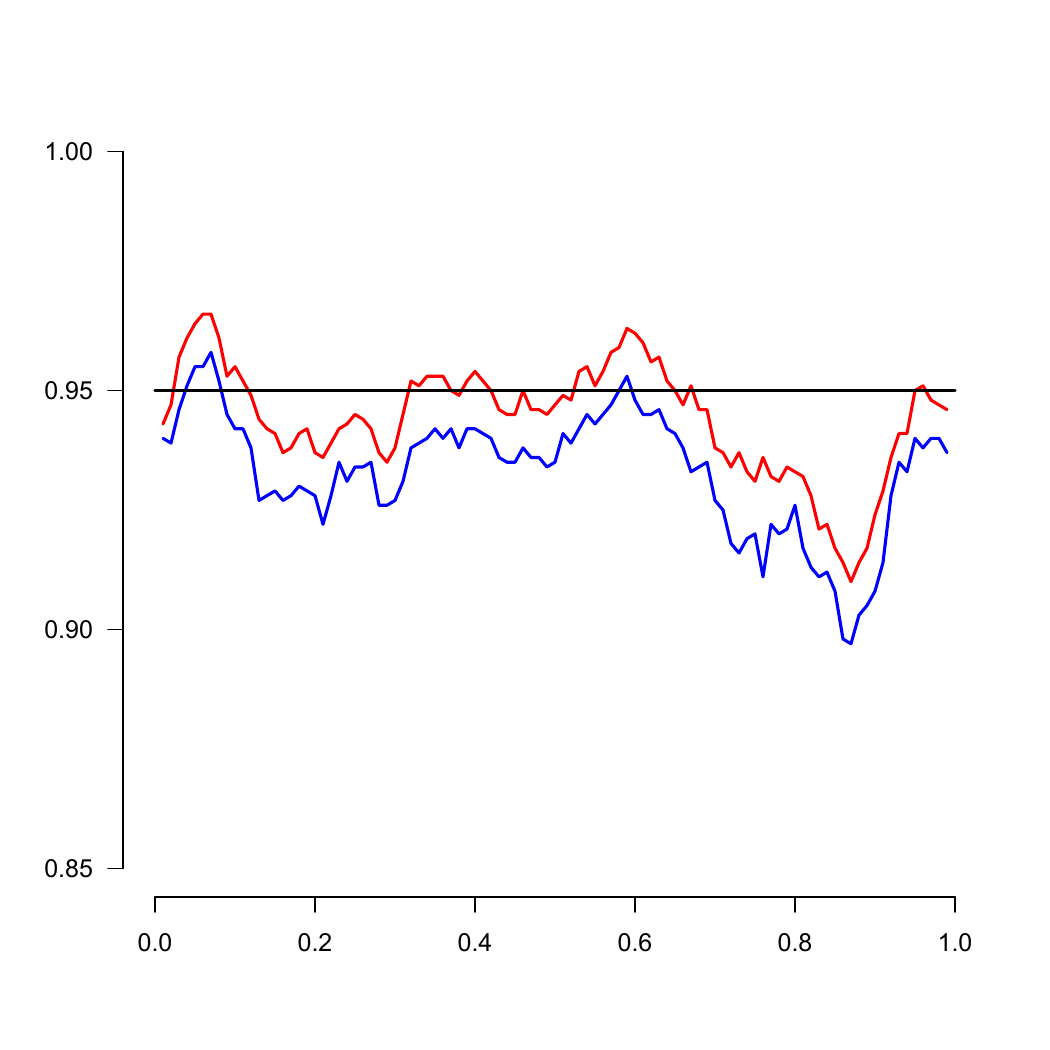}
\caption{The Studentized NW estimator (red), with variance estimated from the residuals, just as for the SLSE, and the non-Studentized NW estimator (blue). The figures are based on $B=1000$ samples of size $n=100$, and $t=0.01,\dots,0.99$. The chosen bandwidths $h$ and $h_0$ are $h=0.5n^{-1/5}$ and $h_0=0.7n^{-1/9}$.}
\label{figure:NW_Studentized2}
\end{figure}

\section{Bandwidth selection}
\label{section:bandwidth_choice}
We propose a bootstrap method to find an approximately MSE optimal bandwidth for estimating $f_0(t)$ at a point $t\in(0,1)$. The MSE we want to minimize as a function of $h$ is given by:
\begin{align}
\label{MSE1}
MSE_h(t)=\E\left\{\left\{\tilde{f}_{nh}(t)-f_0(t)\right\}^2\Bigm|X_1,\dots,X_n\right\}.
\end{align}
Of course, $f_0$ being unknown, this quantity cannot be computed as function of $h$. However, the analogous bootstrap quantity (again using oversmoothing, in the sense that $h_0\asymp n^{-1/9}$) is given by,
\begin{align}
\label{MSE_bootstrap}
MSE_h^*(t)=\E\left\{\left\{\tilde{f}_{nh}^*(t)-\tilde f_{nh_0}(t)\right\}^2\Bigm|(X_1,Y_1),\dots,(X_n,Y_n)\right\},
\end{align}
where $h_0$ is called a ``pilot'' bandwidth. We shall show that (\ref{MSE_bootstrap}) is asymptotically independent of the constant in the pilot bandwidth $h_0$ if we take $h_0\asymp n^{-1/9}$.

We have:
\begin{align}
\label{variance_bias_decomp}
	MSE_h^*(t)&=\E\left\{\left\{\int K_h(t-x)\,\left\{\hat f_n^*(x)-\tilde f_{nh_0}(x)\right\}\,dx\right\}^2\Bigm|(X_1,Y_1),\dots,(X_n,Y_n)\right\}\nonumber\\
&\qquad\qquad	+\E\left\{\left\{\int K_h(t-x)\, \tilde f_{nh_0}(x)\,dx- \tilde f_{nh_0}(t)\right\}^2\Bigm|X_1,\dots,X_n\right\}
\end{align}
For the second term on the right we get:
\begin{align*}
\int K_h(t-x)\,\tilde f_{nh_0}(x)\,dx-\tilde f_{nh_0}(t)
=\tfrac12h^2\tilde f_{nh_0}''(t)\int u^2K(u)\,du+o_p\left(h^2\right),
\end{align*}
so
\begin{align*}
&\E\left\{\left\{\int K_h(t-x)\, \tilde f_{nh_0}(x)\,dx- \tilde f_{nh_0}(t)\right\}^2\Bigm|X_1,\dots,X_n\right\}\\
&=\tfrac14h^4\tilde f_{nh_0}''(t)^2\left\{\int u^2K(u)\,du\right\}^2+o_p\left(h^4\right).
\end{align*}

We have the following result.

\begin{lemma}
\label{lemma:bias_term}
Let the conditions of Theorem \ref{th:limit_SLSE} be satisfied. Moreover, let $h_0=h_{n,0}\sim c_0n^{-1/9}$, as $n\to\infty$. Then
\begin{align*}
&\tilde f_{nh_0}''(t)\stackrel{p}\longrightarrow f_0''(t),\qquad n\to\infty.
\end{align*}
\end{lemma}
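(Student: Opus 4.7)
My plan is to start from the representation of $\tilde f_{nh_0}''(t)$ as a Stieltjes integral against the LSE. Differentiating under the integral in (\ref{def_SLSE}) gives $\tilde f_{nh_0}''(t)=\int K_{h_0}''(t-x)\hat f_n(x)\,dx$; for $t\in(0,1)$ fixed and $h_0\to 0$, eventually $[t-h_0,t+h_0]\subset(0,1)$, so a single integration by parts (with the boundary contribution vanishing because $K_{h_0}'$ has support $[-h_0,h_0]$) yields
\begin{align*}
\tilde f_{nh_0}''(t)=\int K_{h_0}'(t-x)\,d\hat f_n(x).
\end{align*}
This is the same representation used in (\ref{SMLE2}).

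Next I would split
\begin{align*}
\tilde f_{nh_0}''(t)=\int K_{h_0}'(t-x)\,d f_0(x)+\int K_{h_0}'(t-x)\,d\left(\hat f_n-f_0\right)(x).
\end{align*}
For the first summand, since $f_0'$ exists and $f_0''$ is continuous at $t$, another integration by parts (again with no boundary contribution) turns it into $\int K_{h_0}(t-x)f_0''(x)\,dx$, which is a standard kernel smoother of $f_0''$ and converges to $f_0''(t)$ as $h_0\to 0$ by the usual dominated convergence argument for continuous $f_0''$.

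For the second summand, I would integrate by parts once more to move the derivative off the empirical increment:
\begin{align*}
\int K_{h_0}'(t-x)\,d\left(\hat f_n-f_0\right)(x)=\int K_{h_0}''(t-x)\left\{\hat f_n(x)-f_0(x)\right\}\,dx,
\end{align*}
the boundary term again being zero. Applying Cauchy-Schwarz bounds the absolute value by $\bigl(\int K_{h_0}''(t-x)^2\,dx\bigr)^{1/2}\cdot\bigl(\int_{t-h_0}^{t+h_0}(\hat f_n-f_0)^2\,dx\bigr)^{1/2}$. A change of variables gives the first factor as $O(h_0^{-5/2})$, and the $L_2$ bound for monotone least squares with sub-Gaussian errors referenced in the paper (Chapter 9 of \cite{geer:00}, or the switch relation as in \cite{kim_piet:18:SJS}) gives $\int_{t-h_0}^{t+h_0}(\hat f_n-f_0)^2\,dx=O_p(h_0\,n^{-2/3})$. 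The product is $O_p(h_0^{-2}n^{-1/3})$, and for $h_0\asymp n^{-1/9}$ this is $O_p(n^{-1/9})=o_p(1)$, which is exactly where the choice $h_0\asymp n^{-1/9}$ is used (any $h_0\gg n^{-1/6}$ would suffice for this step, but $n^{-1/9}$ also suits the bias analysis preceding the lemma).

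The main obstacle is the local $L_2$ control of $\hat f_n-f_0$ on the shrinking window $[t-h_0,t+h_0]$: the naive global $L_2$ bound $O_p(n^{-1/3})$ is too crude and would only yield $O_p(n^{-1/18})$ here, so one really needs the \emph{localized} rate $O_p(h_0^{1/2}n^{-1/3})$ which uses that the pointwise $n^{1/3}$ cube-root rate of $\hat f_n$ is uniform across an interval of interest. This is standard but is the one nontrivial ingredient; everything else is integration by parts and the continuity of $f_0''$ at $t$.
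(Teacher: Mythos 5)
Your proof is correct, and it takes a genuinely different, more elementary route than the paper's. The paper replays the machinery of Theorem \ref{th:limit_SLSE}: it sets $\psi_{t,h}(x)=K''((t-x)/h)/(h^3g(x))$, introduces the piecewise-constant companion $\bar\psi_{t,h}$ built on the jump points of $\hat f_n$, uses the characterization of the LSE to pass from $dH_0$ to $d(H_0-\H_n)$, and then separately bounds an empirical-process term (order $h_0^{-3/2}n^{-1/2}$) and a $\psi_{t,h}-\bar\psi_{t,h}$ remainder (order $h_0^{-4}n^{-2/3}$). You bypass all of this: since the lemma claims only consistency and not a limit distribution, a single Cauchy--Schwarz step applied to $\int K_{h_0}''(t-x)\{\hat f_n(x)-f_0(x)\}\,dx$ already yields $o_p(1)$, and your treatment of the deterministic part by integration by parts is exactly the paper's final display. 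What your route buys is simplicity (no entropy bounds, no LSE characterization); what the paper's route buys is uniformity of method with the CLT-type arguments used elsewhere in the paper. One correction to your own commentary: the localized rate $\|(\hat f_n-f_0)1_{[t-h_0,t+h_0]}\|_2=O_p\bigl(h_0^{1/2}n^{-1/3}\bigr)$, which you single out as the one indispensable nontrivial ingredient, is not actually needed here. The global bound $\|(\hat f_n-f_0)1_{[a,b]}\|_2=O_p\bigl(n^{-1/3}\bigr)$ already gives $O_p\bigl(h_0^{-5/2}n^{-1/3}\bigr)=O_p\bigl(n^{-1/18}\bigr)=o_p(1)$ for $h_0\asymp n^{-1/9}$; you dismiss $O_p\bigl(n^{-1/18}\bigr)$ as ``too crude'', but convergence in probability requires nothing sharper, and the paper's own proof likewise uses only the global $L_2$ rate. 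Your localized version is of course also true, gives the better rate $O_p\bigl(n^{-1/9}\bigr)$, and your observation that $h_0\asymp n^{-1/5}$ would break the argument agrees with the paper's remark following the lemma.
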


\begin{remark}
{\rm
Note that this convergence result does not hold if the pilot bandwidth $h_0$ is of order $n^{-1/5}$. For this reason the method suggested in \cite{SenXu2015}, where the pilot bandwidth is chosen of order $n^{-1/5}$  will not work. Another way out is to use subsampling, as used in
\cite{kim_piet:18:SJS}, but choosing the right subsample size is a rather hard problem.
}
\end{remark}

The proof of Lemma \ref{lemma:bias_term} is given in Section \ref{section:appendix}. 
The lemma suggests, as in \cite{hazelton:96}, to take the pilot bandwidth $h_0=c_0n^{-1/9}$ for some $c_0>0$, taking the optimal order for a bandwidth for estimating the second derivative $f_0''$ in the case that the 4th derivative $f_0^{(4)}(t)$ exists and is not equal to zero. Note that for our example function $f_0(x)=x^2+x/5$ we have $f_0^{(4)}(t)=0$ , so in this case we cannot apply the optimality criterion. The most important fact is, however, that $h_0$ has to tend slower to zero than $n^{-1/5}$, since otherwise the variance of $\tilde f_{n,h_0}''(t)$ does not tend to zero. 

For the first term on the right of (\ref{variance_bias_decomp}) we get:
\begin{align*}
\E\left\{\left\{\int K_h(t-x)\,\left\{\hat f_n^*(x)-\tilde f_{nh_0}(x)\right\}\,dx\right\}^2\Bigm|(X_1,Y_1),\dots,(X_n,Y_n)\right\}
\sim\frac{S_n}{nh}+o_p\left(\frac1{nh}\right),
\end{align*}
where
\begin{align*}
S_n\stackrel{p}\longrightarrow\frac{\s_0^2}{g(t)}\int K(u)^2\,du,
\end{align*}
if $h\sim cn^{-1/5}$ (see (\ref{bootstrap_var}) and the argument using the Lindeberg-Feller central limit theorem part of the proof of Theorem \ref{th:bootstrap_SLSE}).

So, asymptotically, the bandwidth $h$, minimizing (\ref{variance_bias_decomp}) minimizes
\begin{align*}
\frac{\s_0^2}{g(t)nh}\int K(u)^2\,du+\tfrac14h^4\tilde f_0''(t)^2\left\{\int u^2K(u)\,du\right\}^2.
\end{align*}
The minimization of (\ref{MSE1}) leads asymptotically to the same minimization over $h$.

Instead of minimizing (\ref{MSE_bootstrap})  we minimize a  Monte Carlo approximation of (\ref{MSE_bootstrap}):\begin{align}
\label{bootstrap_L2-distance}
B^{-1}\sum_{i=1}^B \left\{\tilde f^{*,i}_{nh}(t)-\tilde f_{nh_0}(t)\right\}^2,
\end{align}
where the $\tilde f^{*,i}_{nh}$, $i=1,\dots,B$ are the estimates in $B$ bootstrap samples.

\begin{figure}[!ht]
\begin{subfigure}[b]{0.3\textwidth}
\includegraphics[width=\textwidth]{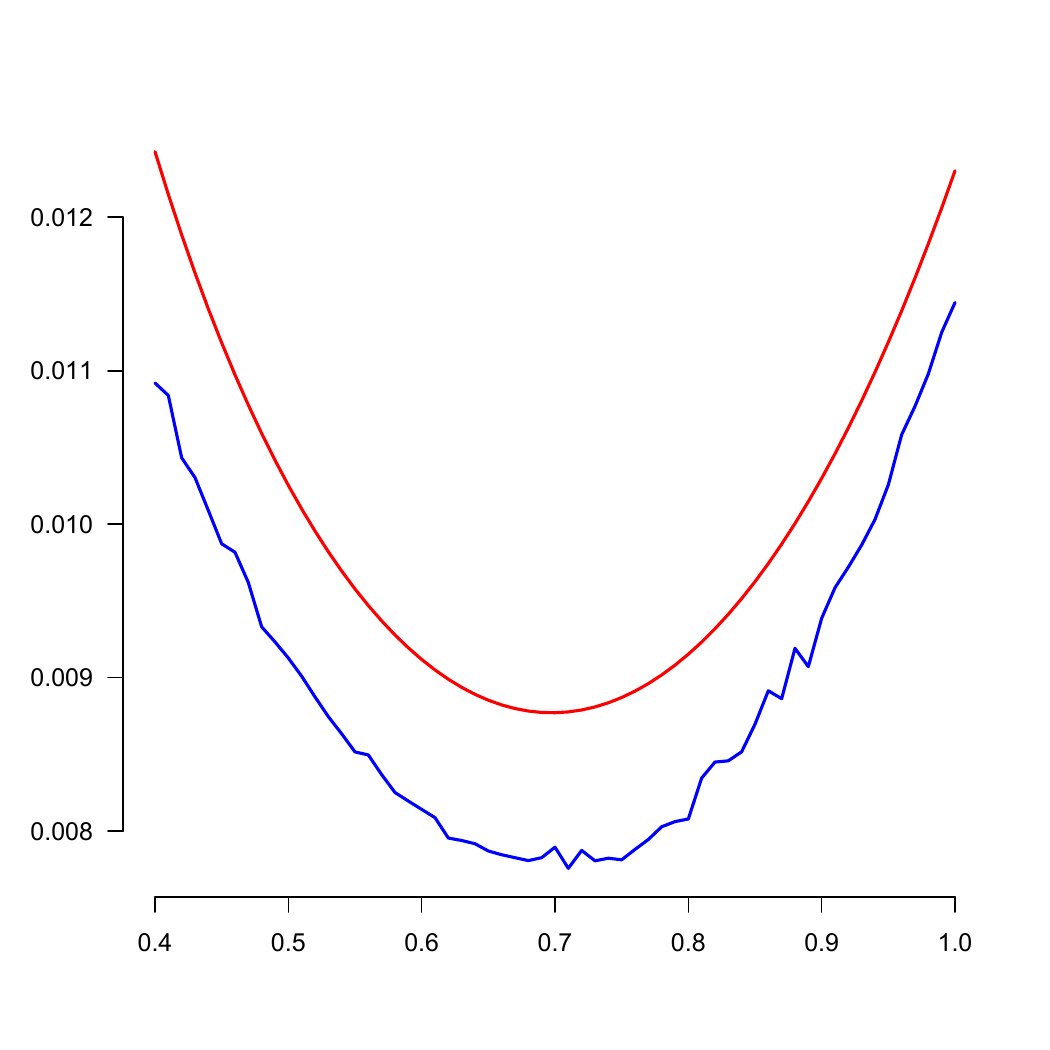}
\caption{}
\label{fig:MSE500}
\end{subfigure}
\begin{subfigure}[b]{0.3\textwidth}
\includegraphics[width=\textwidth]{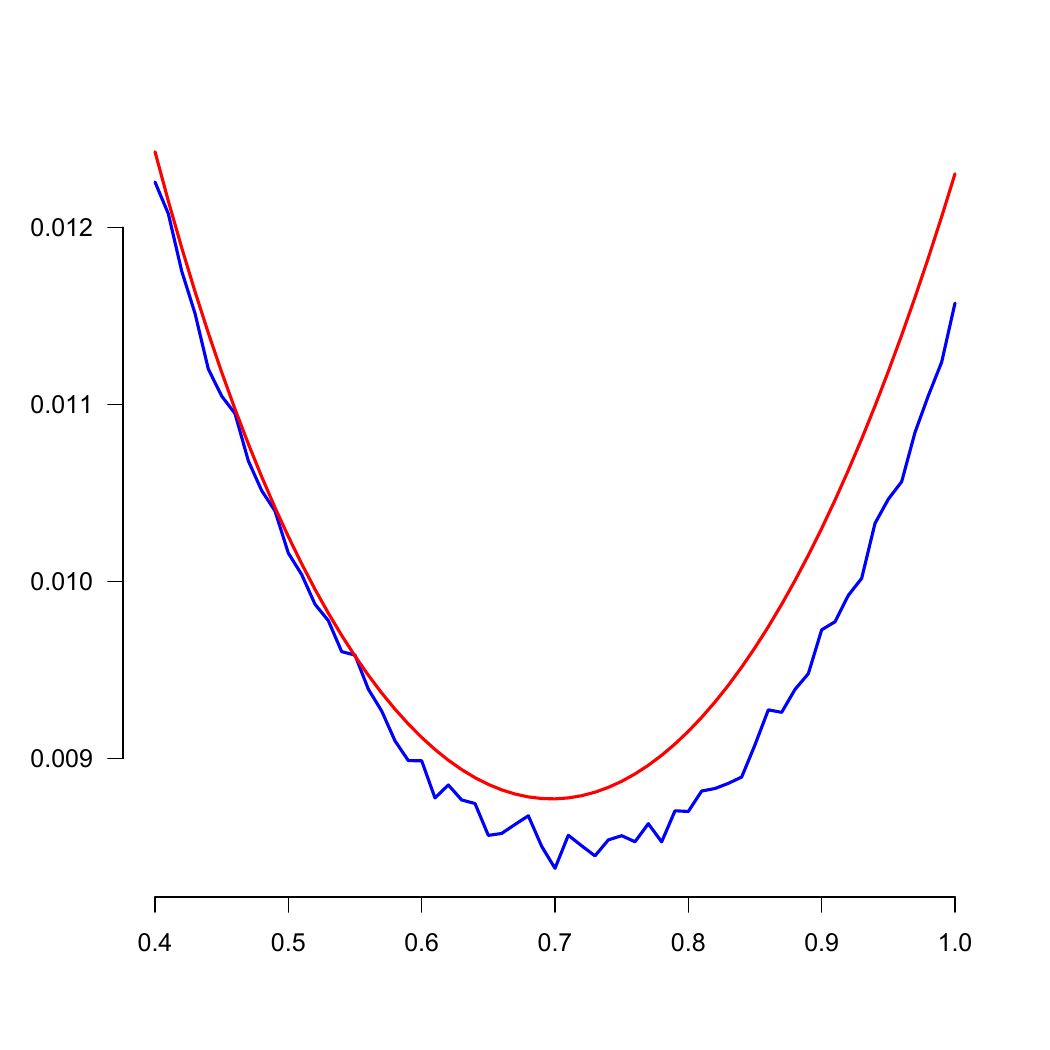}
\caption{}
\label{fig:MISE1000}
\end{subfigure}
\begin{subfigure}[b]{0.3\textwidth}
\includegraphics[width=\textwidth]{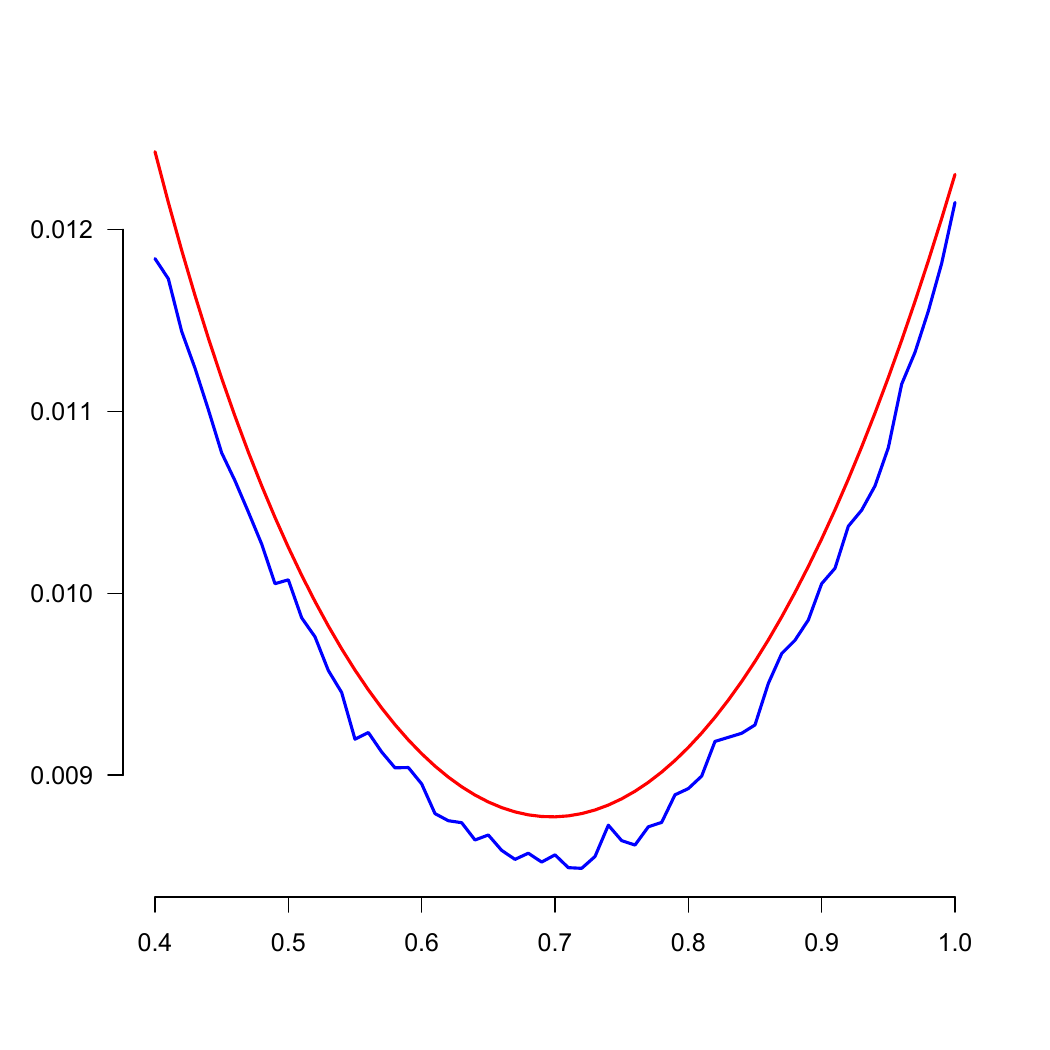}
\caption{}
\label{fig:MISE5000}
\end{subfigure}
\caption{Estimated MISE, for $n=500, 1000$ and $5000$ and $h=cn^{-1/5}$, $c=0.4,0.41,\dots,1$ and asymptotic MISE. The blue curve is based on (\ref{bootstrap_L2-distance_global}), and the red curve on  (\ref{asymptotic_MISE}), $B=10,000$,  where $t_i=0.20,0.21,\dots,0.80$, and $m=60$. (a) Estimates for $n=500$, (b) Estimates for $n=1000$, (c) Estimates for $n=5000$. The asymptotically minimizing $c$ in $h=cn^{-1/5}$ is $c_{min}\approx0.697601$.}
\label{figure:MISE}
\end{figure}

As we are choosing a fixed bandwidth over the range of $t$-values, it is then natural to aim at minimizing the Mean Integrated Squared Error (MISE) as a function of $h$. The asymptotically MISE optimal bandwidth can also be approximated by a smoothed bootstrap experiment, in which case one replaces (\ref{bootstrap_L2-distance}) by
\begin{align}
\label{bootstrap_L2-distance_global}
\widehat{MISE}^*_c=n^{4/5}B^{-1}\sum_{j=1}^B \sum_{i=1}^m\left\{\tilde f^{*,j}_{n,cn^{-1/5}}(t_i)-\tilde f_{nh_0}(t_i)\right\}^2\dd_i,\qquad \dd_i=t_i-t_{i-1}.
\end{align}
for a grid of points $0\le a=t_0<t_1<t_2<\dots<t_m=b\le1$. The latter global minimization produced Figure \ref{figure:MISE}, where we took $a=0.21$ and $b=0.8$. We compare with the plot of the asymptotic MISE as a function of $c$:
\begin{align}
\label{asymptotic_MISE}
AsMISE_c=\frac{\s_0^2}{c}\int K(u)^2\,du\int_{t=a}^b\frac1{g(t)}\,dt+\tfrac14c^4\left\{\int u^2K(u)\,du\right\}^2\int_{t=a}^bf_0''(t)^2\,dt.
\end{align}

In this case we consider
\begin{align}
\label{MISE_bootstrap}
\E\left\{\int_a^b\left\{\tilde{f}_{nh}^*(t)-\tilde f_{nh_0}(t)\right\}^2\,dt\Bigm|(X_1,Y_1),\dots,(X_n,Y_n)\right\},
\end{align}
where $h_0$ is the pilot bandwidth. 
We have:
\begin{align}
\label{variance_bias_decomp_MISE}
&\E\left\{\int_a^b\left\{\tilde{f}_{nh}^*(t)-\tilde f_{nh_0}(t)\right\}^2\,dt\Bigm|(X_1,Y_1),\dots,(X_n,Y_n)\right\}\nonumber\\
&\sim\E\left\{\int_{t=a}^b\left\{\int K_h(t-x)\,\left\{\hat f_n^*(x)-\tilde f_{nh_0}(x)\right\}\,dx\right\}^2\,dt\Bigm|(X_1,Y_1),\dots,(X_n,Y_n)\right\}\nonumber\\
&\qquad\qquad	+\int_{t=a}^b\left\{\int K_h(t-x)\, \tilde f_{nh_0}(x)\,dx- \tilde f_{nh_0}(t)\right\}^2\,dt.
\end{align}
For the second term on the right we get:
\begin{align*}
&\int_{t=a}^b\left\{\int K_h(t-x)\, \tilde f_{nh_0}(x)\,dx-  \tilde f_{nh_0}(t)\right\}^2\,dt\\
&\sim\tfrac14h^4\int_{t=a}^b\tilde f_{nh_0}''(t)^2\,dt\left\{\int u^2K(u)\,du\right\}^2\,dt+o_p\left(h^4\right)
\end{align*}
Since we want $\tilde f_{n,h_0}''$ to be as close as possible to $f_0''$, we suggest to minimize
\begin{align}
\label{MSE_MISE1}
\int_{t=a}^b\left\{\tilde f_{nh_0}''(t)-f_0''(t)\right\}^2\,dt
\end{align}
over $h_0$, which is a direct generalization of the locally optimal choice of $h_0$.

We get:
\begin{align*}
\tilde f_{n,h_0}''(t)-f_0''(t)
= \int K_{h_0}''(t-x)\{\hat f_n(x)-f_0(x)\}\,dx+\int K_{h_0}''(t-x) f_0(x)\,dx-f_0''(t)
\end{align*}
and
\begin{align*}
&\int K_{h_0}''(t-x) f_0(x)\,dx-f_0''(t)\\
&=\int K_{h_0}(t-x) f_0''(x)\,dx-f_0''(t)=\int K_{h_0}(t-x) \left\{f_0''(x)-f_0''(t)\right\}\,dx\\
&=\tfrac12h_0^2 f_0^{(4)}(t)\int u^2K(u)\,du+o\left(h_0^2\right),
\end{align*}
provided a bounded $4$th derivative $f_0^{(4)}(t)$ exists.

This yields:
\begin{align*}
&\int_{t=a}^b\{\tilde f_{n,h_0}''(t)-f_0''(t)\}^2\,dt\\
&\sim \frac{\s_0^2}{nh_0^5}\int K''(u)^2\,du\int_{t=a}^b\frac1{g(t)}\,dt+\tfrac14h_0^4 \left\{\int u^2K(u)\,du\right\}^2
\int_{t=a}^b \left(f_0^{(4)}(t)\right)^2\,dt,
\end{align*}
and minimizing this as a function of $h_0$ gives again $h_0\asymp n^{-1/9}$ if $\int_{t=a}^b \left(f_0^{(4)}(t)\right)^2\,dt\ne0$.

\section{Lake Mendota: yearly number of days frozen}
\label{section:Mendota}
As a real data application, we give confidence intervals for the Lake Mendota data, analyzed in \cite{piet_geurt:14}, Section 1.1. For 157 consecutive years, starting in 1854, the number of days that the lake was frozen was recorded. The idea is that in the presence of global warming, the number of days that the lake is frozen will show a downward trend over the years. It is the first example in \cite{b4:72}.

To apply the methods of the preceding sections, we first rescale the $x$-coordinates to $[0,1]$ by making the transformation
\begin{align*}
X_i:= (X_i-1853)/158=i/158,
\end{align*}
and next by letting $Y_i:=Y_{n-i+1}$. If there is a downward trend in the $Y_i$, there will be an upward trend in the (old) $Y_{n-i+1}$, where $n=157$, and so we can apply the theory of the preceding sections to the new $(X_i,Y_i)$.

As before (in the examples) we take the pilot bandwidth $h_0=0.7n^{-1/9}$, and used the  bandwidth choice of Section \ref{section:bandwidth_choice}, based on (\ref{bootstrap_L2-distance_global}), which gave $c=0.84$ for the optimal bandwidth $h=cn^{-1/5}\approx0.30556$.  The isotonic confidence intervals for this choice of $h$ are shown in Figure \ref{figure:CI_Mendota}. The bootstrap approximation of the MISE as a function of the constant $c$ in the bandwidth choice $h=cn^{-1/5}$ is shown in Figure \ref{figure:MSE_Mendota}.

\begin{figure}[!ht]
\centering
\includegraphics[width=0.6\textwidth]{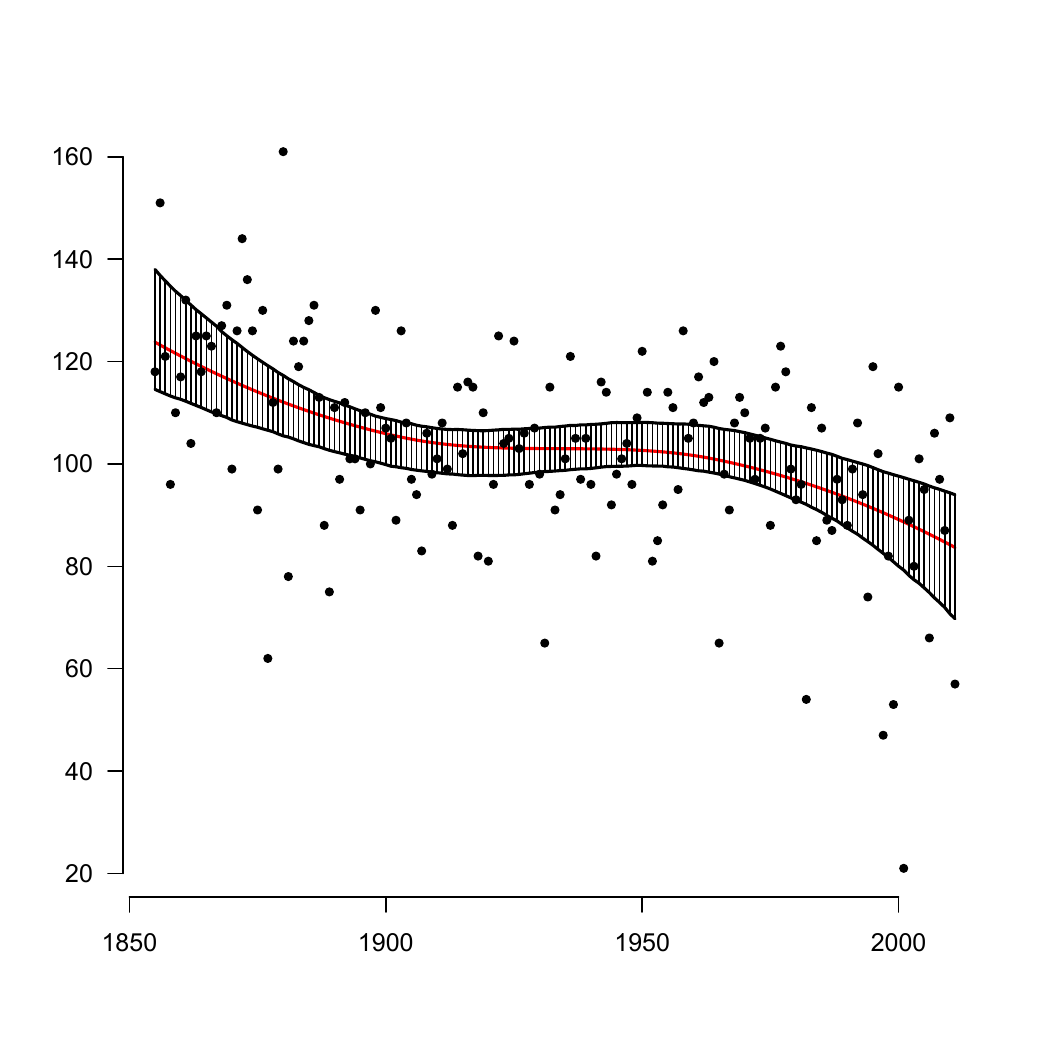}
\caption{95\% confidence intervals for the regression function for the Mendota data. The red curve is the SLSE, with the bandwidth chosen by the method based on the MISE-approximation (\ref{bootstrap_L2-distance_global}).}
\label{figure:CI_Mendota}
\end{figure}

\begin{figure}[!ht]
\centering
\includegraphics[width=0.6\textwidth]{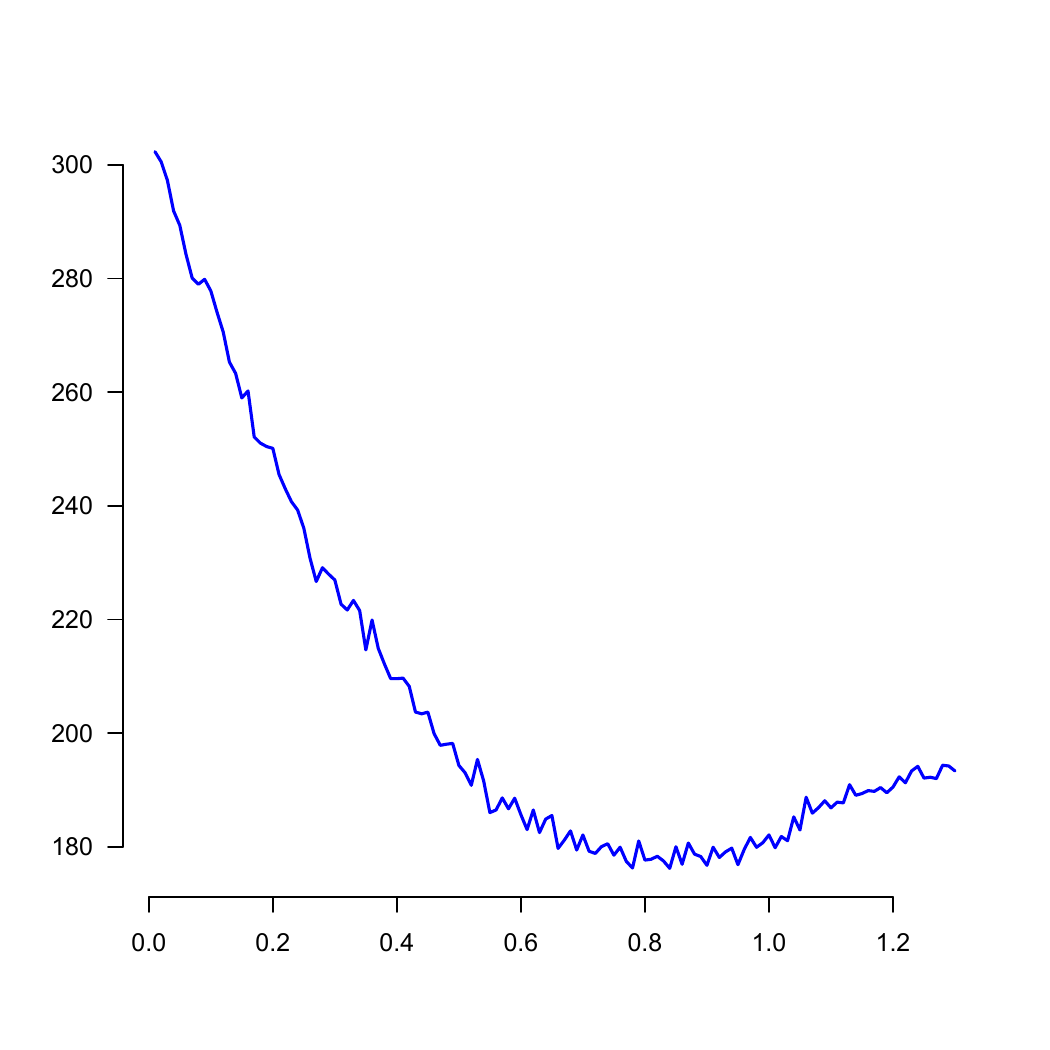}
\caption{The MISE-approximation (\ref{bootstrap_L2-distance_global}), as a function of $c$ for the transformed Mendota data.}
\label{figure:MSE_Mendota}
\end{figure}

\section{Conclusion}
\label{section:conclusion}
We introduce the Smoothed Least Squares Estimator (SLSE) 
\begin{align}
\label{smoothed_LSE}
\tilde f_{nh}(t)=\int K_h(t-x)\hat f_n(x)\,dx
\end{align}
in the monotone regression problem, where $\hat f_n$ is the monotone nonparametric regression estimator, where $K$ is a smooth symmetric kernel with support $[-1,1]$ and $K_h=h^{-1}K(\cdot/h)$. In contrast with $\hat f_n$, the smooth estimator converges at rate $n^{2/5}$, under the conditions of our Theorem 1; the monotone nonparametric regression estimator $\hat f_n$ only converges at rate $n^{1/3}$ in these circumstances. 

We use  the SLSE to construct bootstrap confidence intervals, based on sampling with replacement residuals with respect to an oversmoothed estimator of type (\ref{smoothed_LSE}). The oversmoothing has the effect that the bias is estimated correctly (which is not the case if one uses residuals w.r.t. an estimate, based on a bandwidth of order $n^{-1/5}$) and the bias drops out in the final confidence interval. The idea goes back to similar methods used for NW estimates in \cite{marron:91}.

We compare the isotonic estimates with the NW estimates, suggesting that
the monotone estimates are somewhat more stable if the underlying regression function is monotone. The method extends the construction of confidence intervals for distribution functions in interval censoring models, studied in \cite{SenXu2015} and \cite{kim_piet:18:SJS}. We think the bias problem is solved more efficiently than in \cite{kim_piet:18:SJS}, where undersmoothing or explicit estimation of the bias was suggested, which is also suggested in \cite{hall:92book}.

We describe in Section \ref{section:bandwidth_choice} a method for choosing the bandwidth automatically, correcting the method used in \cite{SenXu2015}. This method is used in Section \ref{section:Mendota} to choose the bandwidth in the classical example of the Lake Mendota data, which was the first example in the book \cite{b4:72}.

Our paper was inspired by the recent paper \cite{moumita:21} for Bayesian confidence intervals in this setting, which will converge at a slower rate and which we will analyze from a bootstrap perspective in a separate paper. We used their example of a regression function in our examples.

All examples in our paper can be recreated using the ${\tt R}$ scripts in \cite{piet_github:21}.

\section{Appendix}
\label{section:appendix}

\begin{proof}[Proof of Theorem \ref{th:limit_SLSE}]
Let $t\in(0,1)$, and let $n$ be sufficiently large, so that $t\in[h,1-h]$, where $h=h_n=cn^{-1/5}$. Then $\tilde f_{nh}$ is represented by the so-called local smooth functional
\begin{align*}
\int K_h(t-x)\,\hat f_n(x)\,dx.
\end{align*}
We now analyze the difference of this functional with the corresponding functional of $f_0$,
\begin{align*}
\int K_h(t-x)\,\left\{\hat f_n(x)-f_0(x)\right\}\,dx.
\end{align*}
Defining $G$ as the distribution function of the $X_i$, we can trivially write this in the following form:
\begin{align*}
&\int K_h(t-x)\,\frac{\hat f_n(x)-f_0(x)}{g(x)}\,dG(x).
\end{align*}

We define
\begin{equation}
\label{def_psi_CS_SML}
\psi_{t,h}(u)=\frac{K_h(t-u)}{g(u)}\,,
\end{equation}
and
$$
\bar\psi_{t,h}(u)=
\left\{\begin{array}{lll}
\psi_{t,h}(\t_i),\,&,\mbox{ if }f_0(u)>\hat f_n(\t_i),\,u\in[\t_i,\t_{i+1}),\\
\psi_{t,h}(s),\,&,\mbox{ if }f_0(s)=\hat f_n(s),\mbox{ for some }s\in[\t_i,\t_{i+1}),\\
\psi_{t,h}(\t_{i+1}),\,&,\mbox{ if }f_0(u)<\hat f_n(\t_i),\,u\in[\t_i,\t_{i+1}),
\end{array}
\right.
$$
where the $\t_i$ are successive points of jump of $\hat f_n$.

We now get:
\begin{align}
\label{decomposition_SLSE}
&\int K_h(t-x)\,\frac{\hat f_n(x)-f_0(x)}{g(x)}\,dG(x)=\int\psi_{t,h}(x)\,\left\{\hat f_n(x)-f_0(x)\right\}\,dG(x)\nonumber\\
&=\int \bar\psi_{t,h}(x)\,\left\{\hat f_n(x)-f_0(x)\right\}\,dG(x)+\int \left\{\psi_{t,h}(x)-\bar\psi_{t,h}(x)\right\}\,\left\{\hat f_n(x)-f_0(x)\right\}\,dG(x)
\end{align}

Let $H_0$ be the distribution function of the pairs $(X_i,Y_i)$, with corresponding empirical distribution function $\H_n$. Then we get for the first term on the last line of in (\ref{decomposition_SLSE})
\begin{align*}
&\int \bar\psi_{t,h}(x)\,\left\{\hat f_n(x)-f_0(x)\right\}\,dG(x)\\
&=\int\bar\psi_{t,h}(x)\,\left\{\hat f_n(x)-y\right\}\,dH_0(x,y)\\
&=\int \bar\psi_{t,h}(x)\,\left\{y-\hat f_n(x)\right\}\,d\left(\H_n-H_0\right)(x,y)\\
&=\int\psi_{t,h}(x)\,\left\{y-\hat f_n(x)\right\}\,d\left(\H_n-H_0\right)(x,y)\\
&\qquad\qquad+\int\left\{\bar\psi_{t,h}(x)-\psi_{t,h}(x)\right\}\,\left\{y-\hat f_n(x)\right\}\,d\left(\H_n-H_0\right)(x,y)\end{align*}
where the second equality follows from the characterization of the LSE $\hat f_n$.

The first term on the last line can be rewritten:
\begin{align*}
&\int\psi_{t,h}(x)\,\left\{y-\hat f_n(x)\right\}\,d\left(\H_n-H_0\right)(x,y)\\
&=\int\psi_{t,h}(x)\,\left\{y-f_0(x)\right\}\,d\left(\H_n-H_0\right)(x,y)\\
&\qquad\qquad+\int\psi_{t,h}(x)\,\left\{f_0(x)-\hat f_n(x)\right\}\,d\left(\H_n-H_0\right)(x,y)\\
&=\int\psi_{t,h}(x)\,\left\{y-f_0(x)\right\}\,d\left(\H_n-H_0\right)(x,y)\\
&\qquad\qquad+\int\psi_{t,h}(x)\,\left\{f_0(x)-\hat f_n(x)\right\}\,d\left(\G_n-G\right)(x),
\end{align*}
where $\G_n$ is the empirical distribution of $X_1,\dots,X_n$.

We now get, under the conditions of the theorem, that the first term on the last line, multiplied by $n^{2/5}$, converges in distribution to a normal distribution with expectation zero and variance
\begin{align}
\label{asymp_var_SLSE}
\s^2=\frac{\s^2_{0}}{c g(t)}\int K(u)^2\,du,
\end{align}
if $h\sim cn^{-1/5}$, as $n\to\infty$, and, by the usual entropy bounds for bounded monotone functions,  the last term is $o_p(n^{-1/5})$. Note that, because of the support of $\psi_{t,h}$, we can restrict ourselves to functions defined on a closed interval $[a,b]\subset(0,1)$, whee $\hat f_n$ is bounded for sufficiently large $n$, because of its consistency on closed subintervals of $(0,1)$.

For the last term of (\ref{decomposition_SLSE}) we follow an argument close to the line of reasoning on p.\ 333 of \cite{piet_geurt:14}. First of all we get, using the Cauchy-Schwarz inequality:
\begin{align}
\label{LSE_ineq1}
\left\|(\hat f_n-f_0)1_{[t-h,t+h]}\right\|_{2,G}=O_p\left(h^{1/2}\|(\hat f_n-f_0)\|_21_{[a,b]}\right),
\end{align}
for a closed subinterval $[a,b]\subset(0,1)$.

 Using this notation, we get, and letting $[a,b]\subset(0,1)$:
\begin{align}
\label{LSE_ineq2}
\left\|(\bar\psi_{t,h}-\psi_{t,h})1_{[a,b]}\right\|_{2,G}=O_p\left(h^{-3/2}\|(\hat f_n-f_0)1_{[a,b]}\|_2\right).
\end{align}
Combining (\ref{LSE_ineq1}) and (\ref{LSE_ineq2}) and using the Cauchy-Schwarz inequality, we obtain:
\begin{align*}
&\left|\int_a^b \left\{\bar\psi_{t,h}(x)-\psi_{t,h}(x)\right\}\,\bigl\{\hat f_n(x)-f_0(x)\bigr\}\,dG(x)\right|\\
&=O_p\left(h^{-1}\left\|(\hat f_n-f_0)1_{[a,b]}\right\|_2^2\right).
\end{align*}
By results in Chapter 9 of \cite{geer:00}, $\|(\hat f_n-f_0)1_{[a,b]}\|_2=O_p(n^{-1/3})$ and if $h\asymp n^{-1/5}$, we obtain from this:
\begin{align*}
&\left|\int_a^b \left\{\bar\psi_{t,h}(x)-\psi_{t,h}(x)\right\}\,\bigl\{\hat f_n(x)-f_0(x)\bigr\}\,dG(x)\right|\\
&=O_p\left(n^{1/5-2/3}\right)=O_p\left(n^{-7/15}\right)=o_p\left(n^{-2/5}\right).
\end{align*}

So we get as conclusion:
\begin{align*}
n^{2/5}\int K_h(t-x)\,\bigl\{\hat f_n(x)-f_0(x)\bigr\}\,dx\stackrel{d}\longrightarrow N(0,\s^2),
\end{align*}
where $N(0,\s^2)$ is a normal distribution with expectation zero and variance (\ref{asymp_var_SLSE}), if $h\sim cn^{-1/5}$, as $n\to\infty$.

Since, under the conditions of the theorem, we have
\begin{align*}
n^{2/5}\left\{\int K_h(t-x)\,f_0(x)\,dx-f_0(t)\right\}=\tfrac12c^2 f_0''(t)\int u^2K(u)\,du+o(1),
\end{align*}
if $h\sim cn^{-1/5}$, as $n\to\infty$, the result  follows.
\end{proof}

\begin{proof}[Proof of Theorem \ref{th:bootstrap_SLSE}]
We follow the set-up of the proof of Theorem \ref{th:limit_SLSE}
and assume $t\in[h,1-h]$. This time $\tilde f_{nh}^*$ is represented by the local smooth functional
\begin{align*}
\int K_h(t-x)\,\hat f_n^*(x)\,dx,
\end{align*}
where $\hat f_n^*$ is the LSE based on a bootstrap sample.
Consider
\begin{align*}
\int K_h(t-x)\,\left\{\hat f_n^*(x)-\tilde f_{nh_0}(x)\right\}\,dx
\end{align*}
and recall that $G$ is the distribution function of the $X_i$. Then:
\begin{align*}
\int K_h(t-x)\,\left\{\hat f_n^*(x)-\tilde f_{nh_0}(x)\right\}\,dx=\int K_h(t-x)\,\frac{\hat f_n^*(x)-\tilde f_{nh_0}(x)}{g(x)}\,dG(x).
\end{align*}

Let $\tilde H_{n,h_0}$ be the distribution function of the pairs $(X_i,Y_i^*)$, induced by uniform sampling with replacement from the centered residuals $\tilde E_i$. Defining $\psi_{t,h}$ as in (\ref{def_psi_CS_SML}), define
$$
\bar\psi_{t,h}^*(x)=
\left\{\begin{array}{lll}
\psi_{t,h}(\t_i^*),\,&,\mbox{ if }\tilde f_{nh_0}(x)>\hat f_n^*(\t_i),\,u\in[\t_i,\t_{i+1}),\\
\psi_{t,h}(s),\,&,\mbox{ if }\tilde f_{nh_0}(s)=\hat f_n^*(s),\mbox{ for some }s\in[\t_i,\t_{i+1}),\\
\psi_{t,h}(\t_{i+1}^*),\,&,\mbox{ if }\tilde f_{nh_0}(x)<\hat f_n^*(\t_i),\,u\in[\t_i,\t_{i+1}),
\end{array}
\right.
$$
where the $\t_i^*$ are successive points of jump of $\hat f_n^*$. We have:
\begin{align*}
\int y\,\bar\psi^*_{t,h}(x)\,d\tilde H_{nh_0}(x,y)=\int \tilde f_{nh_0}(x)\bar\psi^*_{t,h}(x)\,d\tilde H_{n,h_0}(x,y)=\int \tilde f_{nh_0}(x)\bar\psi^*_{t,h}(x)\,d\G_n(x),
\end{align*}
where $\G_n$ is the empirical distribution function of the $X_i$, using the fact that the residuals $\tilde E_i$ are centered. (have mean zero). Hence
\begin{align}
\label{property_ancillary_function}
\int \{y-\tilde f_{nh_0}(x)\}\,\psi_{t,h}(x)\,d\tilde H_{n,h_0}(x,y)=0.\end{align}
By the characterization of the LSE $\hat f_n^*$, we also have
\begin{align*}
\int \{y-\hat f_n^*(x)\}\bar\psi_{t,h}^*(x)\,d\H_n^*(x,y)=0.
\end{align*}

So we get:
\begin{align*}
0&=\int \{y-\hat f_n^*(x)\}\bar\psi^*_{t,h}(x)\,d\H_n^*(x,y)\\
&=\int \{y-\hat f_n^*(x)\}\psi_{t,h}(x)\,d\H_n^*(x,y)+\int\{y-\hat f_n^*(x)\}\bigl\{\bar\psi^*_{t,h}(x)-\psi_{t,h}(x)\bigr\}\,d\H_n^*(x,y)\\
&=\int \{y-\tilde f_{nh_0}(x)\}\psi_{t,h}(x)\,d\bigl(\H_n^*-\tilde H_{n,h_0}\bigr)(x,y)\\
&\qquad+\int \{\tilde f_{nh_0}(x)-\hat f_n^*(x)\}\psi_{t,h}(x)\,d\H_n^*(x,y)\\
&\qquad+\int\{y-\hat f_n^*(x)\}\bigl\{\bar\psi^*_{t,h}(x)-\psi_{t,h}(x)\bigr\}\,d\H_n^*(x,y),
\end{align*}
using (\ref{property_ancillary_function}) to replace $d\H_n^*$ by $d(\H_n^*-\tilde H_{n,h_0})$ in the first  integral after the last equality sign.
This implies
\begin{align}
\label{last_decomp}
&\int K_h(t-x)\bigl\{\hat f_n^*(x)-\tilde f_{nh_0}(x)\bigr\}\,dx\nonumber\\
&=\int\psi_{t,h}(x)\bigl\{\hat f_n^*(x)-\tilde f_{nh_0}(x)\bigr\}\,dG(x)\nonumber\\
&=\int\psi_{t,h}(x)\bigl\{\hat f_n^*(x)-\tilde f_{nh_0}(x)\bigr\}\,dG(x)+\int \{y-\hat f_n^*(x)\}\bar\psi^*_{t,h}(x)\,d\H_n^*(x,y)\nonumber\\
&=\int \{y-\tilde f_{nh_0}(x)\}\psi_{t,h}(x)\,d\bigl(\H_n^*-\tilde H_{n,h_0}\bigr)(x,y)\nonumber\\
&\qquad+\int \{\tilde f_{nh_0}(x)-\hat f_n^*(x)\}\psi_{t,h}(x)\,d\bigl(\G_n-G\bigr)(x)\nonumber\\
&\qquad+\int\{y-\hat f_n^*(x)\}\bigl\{\bar\psi^*_{t,h}(x)-\psi_{t,h}(x)\bigr\}\,d\H_n^*(x,y).
\end{align}

The Lindeberg-Feller central limit theorem implies that
\begin{align*}
n^{2/5}\int \{y-\tilde f_{nh_0}(x)\}\psi_{t,h}(x)\,d\bigl(\H_n^*-\tilde H_{n,h_0}\bigr)(x,y)  \stackrel{\cal D}\longrightarrow N(0,\s^2), 
\end{align*}
given $(X_1,Y_1),\dots,(X_n,Y_n)$, almost surely along sequences $(X_1,Y_1),(X_2,Y_2),\dots$. Note that
\begin{align*}
&n^{2/5}\int \{y-\tilde f_{nh_0}(x)\}\psi_{t,h}(x)\,d\bigl(\H_n^*-\tilde H_{n,h_0}\bigr)(x,y)\\
&=n^{-3/5}\sum_{i=1}^n\{Y_i^*-\tilde f_{nh_0}(x)\}\psi_{t,h}(X_i),
\end{align*}
which has conditional expectation zero and conditional variance
\begin{align}
\label{bootstrap_var}
n^{-1/5}\sum_{i=1}^n\left\{n^{-1}\sum_{j=1}^n \tilde E_j^2\right\}\,\psi_{t,h}(X_i)^2\sim \frac{\s_0^2}{cg(t)}\int K(u)^2\,du,
\end{align}
given $(X_1,Y_1),\dots,(X_n,Y_n)$, if $h\sim cn^{-1/5}$.

Note that
\begin{align*}
n^{-1}\sum_{j=1}^n \tilde E_j^2
\sim n^{-1}\sum_{j=1}^n \left\{Y_j-f_0(X_i)\right\}^2+n^{-1}\sum_{j=1}^n \left\{f_0(X_i)-\tilde f_{nh_0}(X_i)\right\}^2
\longrightarrow \s_0^2,
\end{align*}
almost surely, as $n\to\infty$.

We now turn to the second term after the last equality sign in (\ref{last_decomp}).
By the consistency of $\tilde f_{nh_0}$ and the ensuing conditional consistency of $\hat f_n^*$ on bounded intervals $[a,b]\subset(0,1)$, added to the monotonicity of these functions on such interval, we may assume that
\begin{align*}
{\cal F}_n=\left\{\{\hat f_n^*-\tilde f_{nh_0}\}1_{[a,b]}\right\}
\end{align*}
is of uniformly bounded variation for such an interval $[a,b]$ and hence has entropy with bracketing $H(\e,{\cal F}_n,\G_n)\le c\e^{-1}$  for the $L_2$-distance and some $c>0$, conditionally on $(X_1,Y_1),\dots,(X_n,Y_n)$, along all sequences $(X_1,Y_1),(X_2,Y_2),\dots$. Moreover, the $L_2$-distance $\|\{\hat f_n^*-\tilde f_{nh_0}\}1_{[a,b]}\|_2$ is of order $n^{-1/3}$, just as the $L_2$-distance $\|\{\hat f_n-f_0\}1_{[a,b]}\|_2$. So we find, if $h\asymp n^{-1/5}$,
\begin{align*}
&\int \{\tilde f_{nh_0}(x)-\hat f_n^*(x)\}\psi_{t,h}(x)\,d\bigl(\G_n-G\bigr)(x)=O_p^*\left(h^{-1}n^{-2/3}\right)
=O_p^*\left(n^{-7/15}\right)\\
&=o_p^*\left(n^{-2/5}\right),
\end{align*}
where we add the star $*$ to the $O_p$ and $o_p$ symbol to indicate the conditional meaning of these symbols for the bootstrap samples.

Finally, the third term after the last equality sign in (\ref{last_decomp}) can be written
\begin{align*}
&\int\{y-\hat f_n^*(x)\}\bigl\{\bar\psi^*_{t,h}(x)-\psi_{t,h}(x)\bigr\}\,d\bigl(\H_n^*-\tilde H_{n,h_0}\bigr)(x,y)\\
&\qquad+\int\bigl\{\tilde f_{nh_0}(x)-\hat f_n^*(x)\bigr\}\bigl\{\bar\psi^*_{t,h}(x)-\psi_{t,h}(x)\bigr\}\,d\tilde H_{n,h_0}(x,y)\\
&=\int\{y-\hat f_n^*(x)\}\bigl\{\bar\psi^*_{t,h}(x)-\psi_{t,h}(x)\bigr\}\,d\bigl(\H_n^*-\tilde H_{n,h_0}\bigr)(x,y)\\
&\qquad+\int\bigl\{\tilde f_{nh_0}(x)-\hat f_n^*(x)\bigr\}\bigl\{\bar\psi^*_{t,h}(x)-\psi_{t,h}(x)\bigr\}\,d\G_n(x).
\end{align*}
Using
\begin{align*}
\left|\bar\psi^*_{t,h}(x)-\psi_{t,h}(x)\right|\lesssim h^{-2}\bigl|\hat f_n^*(x)-\tilde f_{nh_0}(x)\bigr|
\end{align*}
for all $x$ in a neighborhood of $t$, we find again that these terms are $o_p(n^{-2/5})$.

Moreover, again conditionally and almost surely,
\begin{align*}
&\int K_h(t-x)\tilde f_{nh_0}(x)\,dx=\tilde f_{nh_0}(t)+\tfrac12 h^2f_0''(t)\int u^2K(u)\,du+o\bigl(h^2\bigr).
\end{align*}
Hence
\begin{align*}
&E\left\{\tilde f_{nh}^*(t)-\tilde f_{nh_0}(t)\bigm|(X_1,Y_1),\dots,(X_n,Y_n)\right\}\\
&=E\left\{\int K_h(t-x)\bigl\{\hat f_n^*(x)-\tilde f_{nh_0}(x)\bigr\}\,dx\bigm|(X_1,Y_1),\dots,(X_n,Y_n)\right\}\\
&\qquad\qquad+\tfrac12 h^2f_0''(t)\int u^2K(u)\,du+o\bigl(h^2\bigr).
\end{align*}
This gives the expression for the mean of the conditional limit distribution of $\tilde f_{nh}^*(t)-\tilde f_{nh_0}(t)$. Note that the bias drops out in the construction of the bootstrap confidence intervals.
\end{proof}

\begin{proof}[Proof of Lemma \ref{lemma:bias_term}]
We have:
\begin{align*}
&\tilde f_{nh_{n,0}}''(t)-h_{n,0}^{-3}\int K''\left((t-x)/h_{n,0}\right)\,f_0(x)\,dx\\
&=h_{n,0}^{-3}\int K''\left((t-x)/h_{n,0}\right)\,\left\{\hat f_n(x)-f_0(x)\right\}\,dx,
\end{align*}
and
\begin{align*}
&\int K''\left((t-x)/h_{n,0}\right)\,\left\{\hat f_n(x)-f_0(x)\right\}\,dx\\
&=\int \frac{K''\left((t-x)/h_{n,0}\right)}{g(x)}\,\left\{\hat f_n(x)-f_0(x)\right\}\,dG(x)\\
&=\int \frac{K''\left((t-x)/h_{n,0}\right)}{g(x)}\,\left\{\hat f_n(x)-y\right\}\,dH_0(x,y),
\end{align*}
where $G$ is the distribution function of the $X_i$ and $H_0$ the distribution function of the pairs $(X_i,Y_i)$.
We now define the function $\psi_{t,h}$ by
\begin{align*}
\psi_{t,h}(x)=\frac{K''\left((t-x)/h\right)}{h^3g(x)}\,.
\end{align*}
and introduce a piecewise constant version $\bar\psi_{t,h}$ of $\psi_{t,h}$ as
\begin{align*}
\bar\psi_{t,h}(x)=
\left\{\begin{array}{lll}
\psi_{t,h}(\t_i),\,&,\mbox{ if }f_0(x)>\hat f_n(\t_i),\,u\in[\t_i,\t_{i+1}),\\
\psi_{t,h}(s),\,&,\mbox{ if }f_0(s)=\hat f_n(s),\mbox{ for some }s\in[\t_i,\t_{i+1}),\\
\psi_{t,h}(\t_{i+1}),\,&,\mbox{ if }f_0(x)<\hat f_n(\t_i),\,u\in[\t_i,\t_{i+1}),
\end{array}
\right.
\end{align*}
where the $\t_i$ are successive points of jump of $\hat f_n$. So we can write:
\begin{align*}
&h_{n,0}^{-3}\int K''\left((t-x)/h_{n,0}\right)\,\left\{\hat f_n(x)-f_0(x)\right\}\,dx\\
&=h_{n,0}^{-3}\int \frac{K''\left((t-x)/h_{n,0}\right)}{g(x)}\,\left\{\hat f_n(x)-y\right\}\,dH_0(x,y))\\
&=\int \psi_{t,h_{n,0}}(x)\,\left\{\hat f_n(x)-y\right\}\,dH_0(x,y)\\
&=\int\left\{\psi_{t,h_{n,0}}(x)-\bar\psi_{t,h_{n,0}}(x)\right\}\,\left\{\hat f_n(x)-y\right\}\,dH_0(x,y)\\
&\qquad\qquad+\int \bar\psi_{t,h_{n,0}}(x)\,\left\{\hat f_n(x)-y\right\}\,dH_0(x,y)\\
&=\int\left\{\psi_{t,h_{n,0}}(x)-\bar\psi_{t,h_{n,0}}(x)\right\}\,\left\{\hat f_n(x)-y\right\}\,dH_0(x,y)\\
&\qquad\qquad+\int \bar\psi_{t,h_{n,0}}(x)\,\left\{\hat f_n(x)-y\right\}\,d\left(H_0-\H_n\right)(x,y),
\end{align*}
using the characterization of the LSE $\hat f_n$ in the last step.
We now get:
\begin{align*}
&\int\left\{\psi_{t,h_{n,0}}(x)-\bar\psi_{t,h_{n,0}}(x)\right\}\,\left\{\hat f_n(x)-y\right\}\,dH_0(x,y)\\
&\int\left\{\psi_{t,h_{n,0}}(x)-\bar\psi_{t,h_{n,0}}(x)\right\}\,\left\{\hat f_n(x)-f_0(x)\right\}\,dG(x)\\
&=O_p\left(h_{n,0}^{-4}n^{-2/3}\right)=O_p\left(n^{-2/9}\right),
\end{align*}
and
\begin{align*}
&\int \bar\psi_{t,h_{n,0}}(x)\,\left\{\hat f_n(x)-y\right\}\,d\left(H_0-\H_n\right)(x,y)
=O_p\left(h_{n,0}^{-3/2}n^{-1/2}\right)=O_p\left(n^{-1/3}\right).
\end{align*}
So the conclusion is:
\begin{align*}
\tilde f_{nh_{n,0}}''(t)-h_{n,0}^{-3}\int K''\left((t-x)/h_{n,0}\right)\,f_0(x)\,dx
=O_p\left(n^{-2/9}\right).
\end{align*}
But under the conditions of Theorem \ref{th:limit_SLSE} we have:
\begin{align*}
h_{n,0}^{-3}\int K''\left((t-x)/h_{n,0}\right)\,f_0(x)\,dx=f_0''(t)+o(1),\qquad n\to\infty.
\end{align*}
\end{proof}

\bibliographystyle{imsart-nameyear}
\bibliography{cupbook}

\begin{thebibliography}{16}

\bibitem[\protect\citeauthoryear{Barlow et~al.}{1972}]{b4:72}
\begin{bbook}[author]
\bauthor{\bsnm{Barlow},~\bfnm{R.~E.}\binits{R.~E.}},
  \bauthor{\bsnm{Bartholomew},~\bfnm{D.~J.}\binits{D.~J.}},
  \bauthor{\bsnm{Bremner},~\bfnm{J.~M.}\binits{J.~M.}} \AND
  \bauthor{\bsnm{Brunk},~\bfnm{H.~D.}\binits{H.~D.}}
(\byear{1972}).
\btitle{Statistical inference under order restrictions. {T}he theory and
  application of isotonic regression}.
\bpublisher{John Wiley \& Sons, London-New York-Sydney}
\bnote{Wiley Series in Probability and Mathematical Statistics}.
\bmrnumber{0326887 (48 \#\#5229)}
\end{bbook}
\endbibitem

\bibitem[\protect\citeauthoryear{Brunk}{1970}]{brunk:70}
\begin{binproceedings}[author]
\bauthor{\bsnm{Brunk},~\bfnm{H.~D.}\binits{H.~D.}}
(\byear{1970}).
\btitle{Estimation of isotonic regression}.
In \bbooktitle{Nonparametric {T}echniques in {S}tatistical {I}nference ({P}roc.
  {S}ympos., {I}ndiana {U}niv., {B}loomington, {I}nd., 1969)}
\bpages{177--197}.
\bpublisher{Cambridge Univ. Press, London}.
\bmrnumber{0277070}
\end{binproceedings}
\endbibitem

\bibitem[\protect\citeauthoryear{Chakraborty and Ghosal}{2021}]{moumita:21}
\begin{barticle}[author]
\bauthor{\bsnm{Chakraborty},~\bfnm{Moumita}\binits{M.}} \AND
  \bauthor{\bsnm{Ghosal},~\bfnm{Subhashis}\binits{S.}}
(\byear{2021}).
\btitle{Coverage of credible intervals in nonparametric monotone regression}.
\bjournal{Ann. Statist.}
\bvolume{49}
\bpages{1011--1028}.
\bdoi{10.1214/20-aos1989}
\bmrnumber{4255117}
\end{barticle}
\endbibitem

\bibitem[\protect\citeauthoryear{Groeneboom}{2021}]{piet_github:21}
\begin{bmisc}[author]
\bauthor{\bsnm{Groeneboom},~\bfnm{Piet}\binits{P.}}
(\byear{2021}).
\btitle{{M}onotone {R{}}egression}.
\bhowpublished{\url{https://github.com/pietg/monotone-regression}}.
\end{bmisc}
\endbibitem

\bibitem[\protect\citeauthoryear{Groeneboom and
  Hendrickx}{2017}]{kim_piet:17EJS}
\begin{barticle}[author]
\bauthor{\bsnm{Groeneboom},~\bfnm{Piet}\binits{P.}} \AND
  \bauthor{\bsnm{Hendrickx},~\bfnm{Kim}\binits{K.}}
(\byear{2017}).
\btitle{The nonparametric bootstrap for the current status model}.
\bjournal{Electron. J. Stat.}
\bvolume{11}
\bpages{3446-3484}.
\bdoi{10.1214/17-EJS1345}
\end{barticle}
\endbibitem

\bibitem[\protect\citeauthoryear{Groeneboom and
  Hendrickx}{2018}]{kim_piet:18:SJS}
\begin{barticle}[author]
\bauthor{\bsnm{Groeneboom},~\bfnm{Piet}\binits{P.}} \AND
  \bauthor{\bsnm{Hendrickx},~\bfnm{Kim}\binits{K.}}
(\byear{2018}).
\btitle{Confidence Intervals for the Current Status Model}.
\bjournal{Scand. J. Stat.}
\bvolume{45}
\bpages{135-163}.
\bnote{doi: 10.1111/sjos.12294}.
\bdoi{10.1111/sjos.12294}
\end{barticle}
\endbibitem

\bibitem[\protect\citeauthoryear{Groeneboom, Jongbloed and
  Witte}{2010}]{piet_geurt_birgit:10}
\begin{barticle}[author]
\bauthor{\bsnm{Groeneboom},~\bfnm{P.}\binits{P.}},
  \bauthor{\bsnm{Jongbloed},~\bfnm{G.}\binits{G.}} \AND
  \bauthor{\bsnm{Witte},~\bfnm{B.~I.}\binits{B.~I.}}
(\byear{2010}).
\btitle{Maximum smoothed likelihood estimation and smoothed maximum likelihood
  estimation in the current status model}.
\bjournal{Ann. Statist.}
\bvolume{38}
\bpages{352--387}.
\end{barticle}
\endbibitem

\bibitem[\protect\citeauthoryear{Groeneboom and
  Jongbloed}{2014}]{piet_geurt:14}
\begin{bbook}[author]
\bauthor{\bsnm{Groeneboom},~\bfnm{Piet}\binits{P.}} \AND
  \bauthor{\bsnm{Jongbloed},~\bfnm{Geurt}\binits{G.}}
(\byear{2014}).
\btitle{Nonparametric Estimation under Shape Constraints}.
\bpublisher{Cambridge Univ. Press}, \baddress{Cambridge}.
\end{bbook}
\endbibitem

\bibitem[\protect\citeauthoryear{Hall}{1992}]{hall:92book}
\begin{bbook}[author]
\bauthor{\bsnm{Hall},~\bfnm{Peter}\binits{P.}}
(\byear{1992}).
\btitle{The bootstrap and Edgeworth expansion}.
\bseries{Springer Series in Statistics}.
\bpublisher{Springer}.
\end{bbook}
\endbibitem

\bibitem[\protect\citeauthoryear{Hall, Kay and
  Titterington}{1990}]{Hall_Kay:90}
\begin{barticle}[author]
\bauthor{\bsnm{Hall},~\bfnm{Peter}\binits{P.}},
  \bauthor{\bsnm{Kay},~\bfnm{J.~W.}\binits{J.~W.}} \AND
  \bauthor{\bsnm{Titterington},~\bfnm{D.~M.}\binits{D.~M.}}
(\byear{1990}).
\btitle{Asymptotically optimal difference-based estimation of variance in
  nonparametric regression}.
\bjournal{Biometrika}
\bvolume{77}
\bpages{521--528}.
\bdoi{10.1093/biomet/77.3.521}
\bmrnumber{1087842}
\end{barticle}
\endbibitem

\bibitem[\protect\citeauthoryear{H\"{a}rdle and Marron}{1991}]{marron:91}
\begin{barticle}[author]
\bauthor{\bsnm{H\"{a}rdle},~\bfnm{W.}\binits{W.}} \AND
  \bauthor{\bsnm{Marron},~\bfnm{J.~S.}\binits{J.~S.}}
(\byear{1991}).
\btitle{Bootstrap simultaneous error bars for nonparametric regression}.
\bjournal{Ann. Statist.}
\bvolume{19}
\bpages{778--796}.
\bdoi{10.1214/aos/1176348120}
\bmrnumber{1105844}
\end{barticle}
\endbibitem

\bibitem[\protect\citeauthoryear{Hazelton}{1996}]{hazelton:96}
\begin{barticle}[author]
\bauthor{\bsnm{Hazelton},~\bfnm{Martin}\binits{M.}}
(\byear{1996}).
\btitle{Bandwidth selection for local density estimators}.
\bjournal{Scand. J. Statist.}
\bvolume{23}
\bpages{221--232}.
\bmrnumber{1394655}
\end{barticle}
\endbibitem

\bibitem[\protect\citeauthoryear{Kosorok}{2008}]{kosorok:08}
\begin{bincollection}[author]
\bauthor{\bsnm{Kosorok},~\bfnm{M.~R.}\binits{M.~R.}}
(\byear{2008}).
\btitle{Bootstrapping the {G}renander estimator}.
In \bbooktitle{Beyond parametrics in interdisciplinary research: {F}estschrift
  in honor of {P}rofessor {P}ranab {K}. {S}en}.
\bseries{Inst. Math. Stat. Collect.}
\bvolume{1}
\bpages{282--292}.
\bpublisher{Inst. Math. Statist.}, \baddress{Beachwood, OH}.
\end{bincollection}
\endbibitem

\bibitem[\protect\citeauthoryear{Sen, Banerjee and
  Woodroofe}{2010}]{sen_mouli_woodroofe:10}
\begin{barticle}[author]
\bauthor{\bsnm{Sen},~\bfnm{B.}\binits{B.}},
  \bauthor{\bsnm{Banerjee},~\bfnm{M.}\binits{M.}} \AND
  \bauthor{\bsnm{Woodroofe},~\bfnm{M.~B.}\binits{M.~B.}}
(\byear{2010}).
\btitle{Inconsistency of bootstrap: the {G}renander estimator}.
\bjournal{Ann. Statist.}
\bvolume{38}
\bpages{1953--1977}.
\bdoi{10.1214/09-AOS777}
\bmrnumber{2676880 (2011f:62046)}
\end{barticle}
\endbibitem

\bibitem[\protect\citeauthoryear{Sen and Xu}{2015}]{SenXu2015}
\begin{barticle}[author]
\bauthor{\bsnm{Sen},~\bfnm{Bodhisattva}\binits{B.}} \AND
  \bauthor{\bsnm{Xu},~\bfnm{Gongjun}\binits{G.}}
(\byear{2015}).
\btitle{Model based bootstrap methods for interval censored data}.
\bjournal{Comput. Statist. Data Anal.}
\bvolume{81}
\bpages{121--129}.
\bdoi{10.1016/j.csda.2014.07.007}
\bmrnumber{3257405}
\end{barticle}
\endbibitem

\bibitem[\protect\citeauthoryear{van~de Geer}{2000}]{geer:00}
\begin{bbook}[author]
\bauthor{\bparticle{van~de} \bsnm{Geer},~\bfnm{S.~A.}\binits{S.~A.}}
(\byear{2000}).
\btitle{Applications of empirical process theory}.
\bseries{Cambridge Series in Statistical and Probabilistic Mathematics}
\bvolume{6}.
\bpublisher{Cambridge University Press}, \baddress{Cambridge}.
\bmrnumber{1739079 (2001h:62002)}
\end{bbook}
\endbibitem

\end{thebibliography}

\end{document}